%% file: Arxivver.tex
\documentclass[12pt, reqno]{amsart}
\usepackage{hyperref}
\usepackage[utf8]{inputenc}
\usepackage{quiver}

\usepackage[mathscr]{euscript}
\usepackage{amsmath,amssymb,amsfonts,amsthm,mathrsfs}
\usepackage{tikz}
\usetikzlibrary{positioning}
\usepackage[margin=1in]{geometry}

\theoremstyle{definition}
\newcommand{\str}{\mathrm{str}}

\newcommand{\cU}{\mathcal{U}}
\input{canonicalheader}

\title[~]{Variation of Iwasawa Invariants for Ordinary Representations}

\author[Abhishek]{Abhishek}
\address[Abhishek]{Harish Chandra Research Institute, A CI of Homi Bhabha National Institute,  Chhatnag Road, Jhunsi, Prayagraj (Allahabad) 211 019 India}
\email{abhi.math04@gmail.com}
\author[C. Aribam]{Chandrakant Aribam}
\address[Aribam]{Indian Institute of Science Education and Research Mohali, Punjab (140306), India}
\email{aribam@iisermohali.ac.in}
\author[S. Barman]{Shiva Barman}
\address[Barman]{Indian Institute of Science Education and Research Mohali, Punjab (140306), India}
\email{shiva20barman@gmail.com}
\author[S. Ghosh]{Sohan Ghosh}
\address[Ghosh]{Indian Institute of Science, Bengaluru, Karnataka (560012), India}
\email{ghoshsohan4@gmail.com}

\keywords{Iwasawa theory, Selmer groups, fine Selmer groups, Iwasawa invariants, $\Z_p$-extensions, $p$-adic $L$-functions, ordinary representations}
\subjclass[2020]{Primary: 11R23,  Secondary: 11R34, 11S25}
\begin{document}

\begin{abstract}
\noindent Let $K$ be a number field and $p$ be an odd prime.  Greenberg introduced a natural topology on the space $\mathcal{E}(K)$ of all $\Z_p$-extensions of $K$ and established several boundedness results for the classical Iwasawa invariants. We extend this framework to compare the Iwasawa invariants of Selmer groups attached to an ordinary $p$-adic representation across $\Z_p$-extensions lying in a Greenberg neighbourhood in the sense of \cite{Greenberg73}. We also establish analogous results for the fine Selmer groups. Finally, in a neighbourhood of the cyclotomic $\mathbb{Z}_p$-extension, we provide evidence for the expected connection between the characteristic ideal of the Selmer group and the conjectural $p$-adic $L$-function introduced by Disegni.
\end{abstract}
\maketitle
\section{Introduction}

Let $p$ be an odd prime and $K$ be a number field. Classical Iwasawa theory investigates the behaviour of various arithmetic invariants along infinite towers of number fields, more precisely along the $\mathbb{Z}_p$-extensions $K_{\infty}/K$. For each $n \geq 0$, let $K_n$ be the intermediate field satisfying $K \subset K_n \subset K_\infty$ with $\mathrm{Gal}(K_n/K) \cong \mathbb{Z}/p^n\mathbb{Z}$, and let $A_n$ denote the $p$-primary part of the class group of $K_n$. In this setting, Iwasawa proved the following fundamental theorem:
\begin{theorem}{\cite{Iwasawa59}}\label{Iwasawa's Thm}
    There exist integers $\lambda \geq 0$, $\mu \geq 0$ and $\nu$ such that for $n\gg 0$,
    \begin{center}
        $|A_n|=p^{\mu p^n+ \lambda n +\nu}$
    \end{center}
    The integers $\lambda, \mu, \nu$ are independent of $n$.
\end{theorem}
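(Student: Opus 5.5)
The plan is to follow Iwasawa's classical strategy through class field theory and the structure theory of finitely generated modules over $\Lambda=\mathbb{Z}_p[[T]]$.

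\textbf{Setup.} Set $\Gamma=\mathrm{Gal}(K_\infty/K)$ with topological generator $\gamma$, and identify $\mathbb{Z}_p[[\Gamma]]\cong\Lambda$ via $\gamma\mapsto 1+T$. Let $L_n$ be the maximal unramified abelian pro-$p$ extension of $K_n$, and let $L_\infty=\bigcup L_n$. Put $X=\mathrm{Gal}(L_\infty/K_\infty)\cong\varprojlim A_n$, where the isomorphism comes from class field theory and the inverse limit is taken along norm maps. Then $\Gamma$ acts on $X$ by conjugation, which makes $X$ a $\Lambda$-module.

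\textbf{Reduction to the case of one ramified prime.} Only primes above $p$ ramify in $K_\infty/K$, and there are finitely many of them. Replacing $K$ by some $K_{e}$, I may assume every ramified prime is totally ramified in $K_\infty/K_e$. First treat the model case where exactly one prime $\mathfrak{p}$ ramifies, and it does so totally. Then $\mathrm{Gal}(L_\infty/K)=X\rtimes\Gamma$, and the inertia group of $\mathfrak{p}$ maps isomorphically onto $\Gamma$. A commutator computation identifies the closure of the commutator subgroup of $\mathrm{Gal}(L_\infty/K_n)$ with $\nu_n X$, where $\nu_n=1+\gamma+\dots+\gamma^{p^n-1}=((1+T)^{p^n}-1)/T$. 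This yields
\[A_n\cong X/\nu_n X \quad (n\ge 0),\]
and in particular $X/TX\cong A_0$ is finite. By Nakayama's lemma for compact $\Lambda$-modules, $X$ is finitely generated, and it is torsion since $X/\nu_n X$ is finite for every $n$.

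In the general case, with $s$ ramified primes, there is a submodule $Y_0\subseteq X$, generated by $TX$ and by the differences of the inertia generators. It satisfies $X/Y_0\cong A_0$ and $X/\nu_n Y_0\cong A_n$ with $Y_n=\nu_n Y_0$ for $n\ge e$. Hence it suffices to compute $|Y_0/\nu_n Y_0|$, because $|A_n|$ equals this quantity up to a constant factor $|X/Y_0|$. I expect this group-theoretic bookkeeping to be the main obstacle: one has to pin down the commutator subgroups and verify that the inertia elements behave uniformly in $n$.

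\textbf{Counting.} By the structure theorem, $Y_0$ is pseudo-isomorphic to
\[E=\bigoplus_i\Lambda/(p^{m_i})\oplus\bigoplus_j\Lambda/(f_j^{k_j}),\]
with $f_j$ distinguished polynomials. Set $\mu=\sum m_i$ and $\lambda=\sum k_j\deg f_j$.

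For $E$ I will compute $|E/\nu_n E|$ directly, using $\nu_{n+1}=\omega_{n+1}/\omega_n$ with $\omega_n=(1+T)^{p^n}-1$.
\begin{itemize}
\item On $\Lambda/(p^m)$, the quotient by $\nu_n$ has order $p^{m(p^n-1)}$, since $\nu_n$ is distinguished of degree $p^n-1$ modulo $p$.
\item On $\Lambda/(g)$ with $g$ distinguished of degree $d$ and prime to all $\nu_n$ (which holds since $Y_0/\nu_nY_0$ is finite), the valuations satisfy $v_p(\nu_n(\alpha))=n+c$ for large $n$ and each root $\alpha$ of $g$. The order is $p^{\sum_\alpha v_p(\nu_n(\alpha))}=p^{dn+c'}$.
\end{itemize}

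Finally, I pass from $E$ back to $Y_0$ using the finite kernel and cokernel of the pseudo-isomorphism. A snake-lemma argument shows that $|Y_0/\nu_nY_0|$ and $|E/\nu_nE|$ differ by a constant factor for $n\gg0$, because the finite kernel and cokernel eventually stabilize under $\nu_n$. Combining these gives $|A_n|=p^{\mu p^n+\lambda n+\nu}$ for $n\gg0$.
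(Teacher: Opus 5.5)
The paper does not prove this statement. It is quoted from Iwasawa's 1959 paper as background, so there is no in-paper argument to compare against. Your outline is the classical proof (Iwasawa's, as presented in Washington, Chapter~13), and its overall structure is sound. The root-valuation count on $\Lambda/(p^m)$ and $\Lambda/(g)$ is correct, and so is the pseudo-isomorphism comparison: the finite kernel is killed by $\nu_n\in\mathfrak m^n$ for $n\gg0$, and $\nu_n$ is injective on the elementary module. Two points need repair.

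First, the model-case identity is misstated. The closed commutator subgroup of $\Gal(L_\infty/K_n)\cong X\rtimes\Gamma^{p^n}$ is $(\gamma^{p^n}-1)X=\omega_nX$, where $\omega_n=(1+T)^{p^n}-1=T\nu_n$. So $A_n\cong X/\omega_nX$, not $X/\nu_nX$. As written, $n=0$ gives $A_0\cong X/X=0$, which contradicts your own clause $A_0\cong X/TX$. The correct formula is your general-case one, $A_n\cong X/\nu_nY_0$, specialised to $Y_0=TX$, so the later count is unaffected.

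Second, the base change to $K_e$ has to be undone explicitly, because it replaces $\Lambda$ by $\Lambda_e=\Z_p[[\Gamma^{p^e}]]$. Counting over $K_e$ gives $|A_n|=p^{\mu'p^{n-e}+\lambda'(n-e)+\nu'}$ with $\mu'=\mu_{\Lambda_e}(X)$. The theorem then needs $\mu'/p^e$ to be an integer. This does hold: $\Lambda$ is free of rank $p^e$ over $\Lambda_e$, so $\mu_{\Lambda_e}(X)=p^e\mu_\Lambda(X)$, while $\lambda$ is unchanged. But it must be stated, since integrality of $\mu$ is part of the claim.

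Alternatively, avoid the base change as Washington does. The module $Y_e:=\Gal(L_\infty/L_eK_\infty)$ is a $\Lambda$-submodule of finite index, since it is normal in $\Gal(L_\infty/K)$. For $n\ge e$ one has $A_n\cong X/\nu_{n,e}Y_e$, where $\nu_{n,e}=\omega_n/\omega_e$ is distinguished of degree $p^n-p^e$. Your count then runs verbatim over $\Lambda$ itself and produces an integral $\mu$ directly. Your sentence ``$X/\nu_nY_0\cong A_n$ \dots\ for $n\ge e$'' currently mixes these two normalisations.
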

Iwasawa further conjectured that $\mu = 0$ in the case where $K_\infty = 
K_{\mathrm{cyc}}$ is the cyclotomic $\mathbb{Z}_p$-extension. This naturally 
raises the broader question of how the Iwasawa invariants behave across arbitrary 
$\mathbb{Z}_p$-extensions. Addressing this question, Greenberg \cite{Greenberg73} defined a topology on $\mathcal{E}(K)$, the space of all $\Z_p$-extensions of $K$, and proved several boundedness properties of the classical Iwasawa invariants. In particular, he showed that the $\mu$-invariant is locally bounded on $\mathcal{E}(K)$, while the $\lambda$-invariant is locally bounded in an open subset of $\calE(K)$ where the $\mu$-invariant vanishes. 
Moreover, if $K$ has a unique prime above $p$, then a constant depending on $K$ and $p$ bounds the $\mu$-invariants of all $\Z_p$-extensions of $K$.

A different type of boundedness result was later obtained by Fukuda \cite{Fukuda}. He proved that if the $p$-primary parts of the class groups stabilise at some finite layer, then they remain stable in all higher layers. In particular, if $|A_{n+1}|=|A_n|$ for some sufficiently large $n$, then $|A_m|= |A_n|$ for all $m\geq n$, and consequently $\mu=\lambda=0$.
Refining Greenberg's topology on the space of $\Z_p$-extensions of $K$ and generalising Fukuda's result, Kleine \cite{KleineCanad} established local boundedness results for the Iwasawa invariants of Selmer groups of abelian varieties. Under suitable hypotheses, he showed that there exists a sufficiently small neighbourhood $U$ of a given $\Z_p$-extension $K_\infty/K$ such that, for every $\widetilde{K}_\infty \in U$, the Selmer group $\Sel(A/\widetilde{K}_\infty)$ remains cotorsion, its $\mu$-invariant is bounded above by that of $\Sel(A/K_\infty)$, and its $\lambda$-invariant is bounded above by that of $\Sel(A/K_\infty)$ whenever the corresponding $\mu$-invariants agree.
Building on the ideas of Greenberg and Fukuda, we extend the work of Kleine \cite{KleineCanad} to compare the Iwasawa invariants for Selmer groups (respectively, fine Selmer groups) associated with $p$-ordinary two-dimensional Galois representations over various $\mathbb{Z}_p$-extensions of a number field $K$. 

Let $\calK$ be a finite extension of $\Q_p$ with ring of integers $\calO_\calK$. Let $V\cong \calK^2$ be a two-dimensional $p$-ordinary representation of $G_K$ and fix a $G_K$-stable $\calO_\calK$-lattice $T$. Set $A={V}/{T}$. For a discrete $p$-primary module $M$, write $M^\vee:=\Hom_{\mathrm{cont}}(M,\Q_p/\Z_p)$ for its Pontryagin dual. We denote the $p$-adic cyclotomic character by $\omega_p$.   Let $\Sel^{\Gr}(A/K_\infty)$ (defined in \S \ref{3.1}) denote the Greenberg Selmer group. Our first main result is the following theorem:
\begin{theorem}[Theorem \ref{prop:SelFukuda}]
 Let $V$ be a two-dimensional $p$-ordinary representation of $G_K$. Let $K_\infty$ be a $\mathbb{Z}_p$-extension of $K$. Suppose that, for every prime $v\mid p$, the inertia group $I_v$ acts on $A^-$ via $\omega_p^{-k_v}$ for some $k_v\in \Z_{>0}$, and that $H^0(G_K,A)=0$. Assume also that, for every $v\in P(K_\infty)$ and every prime $w\mid v$ of $K_\infty$, the group $H^0(I_{\infty,w},A^-)$ is finite.
If $\Sel^{\Gr}(A/K_\infty)^\vee$ is a torsion $\Lambda$-module, then there exists $r\geq0$ such that, with $U=\mathcal{U}(K_\infty,r)$,
     \begin{enumerate}[(a)]
         \item $\Sel^{\Gr}(A/\widetilde{K}_\infty)^\vee$ is a torsion $\Lambda$-module for each $\widetilde{K}_\infty\in U$,
         \item $\mu(\Sel^{\Gr}(A/\widetilde{K}_\infty)^\vee)\leq \mu(\Sel^{\Gr}(A/K_\infty)^\vee)$ for each $\widetilde{K}_\infty\in U$,
         \item $\lambda(\Sel^{\Gr}(A/\widetilde{K}_\infty)^\vee)\leq \lambda(\Sel^{\Gr}(A/K_\infty)^\vee)$ for each $\widetilde{K}_\infty\in U$ whenever $\mu(\Sel^{\Gr}(A/\widetilde{K}_\infty)^\vee)=\mu(\Sel^{\Gr}(A/K_\infty)^\vee)$.
     \end{enumerate}
\end{theorem}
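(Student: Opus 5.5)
The plan is to follow the strategy of Kleine \cite{KleineCanad}, which combines Greenberg's topology with a Fukuda-type stabilisation argument. The idea is to compare the two $\Z_p$-extensions through a common finite layer. If $\widetilde{K}_\infty\in\mathcal{U}(K_\infty,r)$, then $K_\infty$ and $\widetilde{K}_\infty$ share the layer $K_r=\widetilde{K}_r$. Their compositum $L=K_\infty\widetilde{K}_\infty$ is a $\Z_p^2$-extension (or equals $K_\infty$). Moreover, the primes above $p$ and the finitely many ramified primes of $V$ have the same splitting and ramification behaviour up to the $r$-th layer.

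\textbf{Step 1: control at finite level.} For a finite layer $F$, put $M_F=\Sel^{\Gr}(A/F)$. I would prove a control theorem for the restriction maps $M_{K_n}\to \Sel^{\Gr}(A/K_\infty)^{\Gamma_n}$, with kernel and cokernel bounded independently of $n$.
\begin{itemize}
\item The kernel is $H^1(\Gamma_n, A(K_\infty))$. It vanishes because $H^0(G_K,A)=0$, which forces $A(K_\infty)=0$: $A(K_\infty)$ is a $p$-primary $\Gamma$-module whose invariants are zero, and $\Gamma$ is pro-$p$.
\item The local cokernel terms at $v\mid p$ are controlled by $H^0(I_{\infty,w},A^-)$, which is finite by hypothesis. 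The assumption that inertia acts on $A^-$ through $\omega_p^{-k_v}$ with $k_v>0$ makes these invariants explicit and uniformly bounded, since the character is nontrivial on inertia of any $\Z_p$-extension.
\item At primes $v\nmid p$ the contributions are finite, and bounded by Tamagawa-type factors. This needs care for primes that split completely.
\end{itemize}
Since $\Sel^{\Gr}(A/K_\infty)^\vee$ is $\Lambda$-torsion, the structure theory then gives $\mathrm{corank}_{\calO_\calK}$ and $\mathrm{ord}$ of the $\Gamma_n$-coinvariants in the form $\lambda$ and $\mu p^n+\lambda n+\nu$ for $n\gg0$.

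\textbf{Step 2: choose $r$.} Following Fukuda's argument, as used by Kleine, I would show that the $\mu$- and $\lambda$-invariants of a torsion module $X$ can be read off from finitely many layers. Choose $n_0$ so that the growth formula for $\Sel^{\Gr}(A/K_\infty)^\vee$ holds from $n_0$ on, and take $r>n_0$ large enough that the following hold.
\begin{itemize}
\item The layers $K_{n_0},\dots,K_{r}$ already exhibit the stable growth.
\item Every prime of $P(K_\infty)$, and every bad prime, has the same decomposition and inertia in $\widetilde{K}_\infty/K$ as in $K_\infty/K$. This is possible by Greenberg's description of neighbourhoods, since decomposition groups are open or trivial.
\end{itemize}
For $\widetilde{K}_\infty\in U$, the Selmer groups over the common layers $K_n=\widetilde{K}_n$, $n\le r$, coincide. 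The uniform control in Step 1 then bounds the coinvariants $(\widetilde X)_{\widetilde\Gamma_n}$ for $n\le r$ by the same quantities as for $X$, up to the controlled error.

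\textbf{Step 3: deduce (a)--(c).} For (a), I would use Nakayama's lemma on $\widetilde X=\Sel^{\Gr}(A/\widetilde{K}_\infty)^\vee$. Finiteness of $\mathrm{corank}\,(\widetilde X)_{\widetilde\Gamma_r}\le\lambda(X)$ with respect to $\omega_r$ forces $\widetilde X$ to be torsion, by the criterion that a finitely generated $\Lambda$-module with $\widetilde X/\omega_r$ of finite $\calO_\calK$-rank less than $p^r-p^{r-1}$ is torsion. Given torsion, the inequality
\[
\mu(\widetilde X)p^n+\lambda(\widetilde X)n\le \mu(X)p^n+\lambda(X)n+C \qquad (n_0\le n\le r),
\]
with $C$ independent of $r$, yields (b) and (c) once $r$ is large relative to $C$. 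Here one applies a Fukuda-type lemma: the ranks of $\widetilde X/\omega_n$ are nondecreasing and stabilise, so they cannot jump after the common layer. This argument is delicate.

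\textbf{Main obstacle.} The hardest part is making the constant in the control theorem genuinely \emph{uniform} over all $\widetilde{K}_\infty\in U$. In particular, the local terms at primes above $p$ must be bounded using only the hypothesis on $H^0(I_{\infty,w},A^-)$ for $K_\infty$, not for $\widetilde{K}_\infty$. I would first try to show that, for $r$ large, the inertia groups of $\widetilde{K}_\infty$ and $K_\infty$ at $v\mid p$ have the same image under the character $\omega_p^{-k_v}$ modulo $p^r$. The explicit form of the inertia action should then give the same finite bound.
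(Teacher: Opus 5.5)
Your overall route is the paper's: a control theorem whose kernel and cokernel are bounded uniformly in $n$ and in $\widetilde K_\infty\in\calU(K_\infty,r)$, combined with $\Sel^{\Gr}(A/K_n)=\Sel^{\Gr}(A/\widetilde K_n)$ for $n\le r$. Your rank argument for (a) is fine.

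\textbf{The gap in (b) and (c).} Your mechanism for (b) and (c) would fail. The displayed inequality does not follow from uniform control. The order of $\widetilde X/\omega_n\widetilde X$ follows $p^{\mu(\widetilde X)p^n+\lambda(\widetilde X)n+\tilde\nu}$ only from a level that depends on $\widetilde K_\infty$ and may exceed $r$, and $\tilde\nu$ is uncontrolled. The heuristic that ranks ``cannot jump after the common layer'' is also false. Take $X=\Lambda/(p)$ and $\widetilde X=\Lambda/(\Phi_{p^s}(1+T))$ with $s>r$. Since $\Phi_{p^s}(1+T)\equiv p\pmod{\omega_n}$ for $n<s$, one has $X/\omega_nX\cong\widetilde X/\omega_n\widetilde X$ for all $n\le r$, and both are Fukuda modules with parameters $(1,1,1)$. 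Yet $\mu(\widetilde X)=0<1=\mu(X)$ and $\lambda(\widetilde X)=p^s-p^{s-1}$. So the rank jumps at $n=s$, and your inequality fails for every $1\le n\le r$. The $\mu$-invariant can drop while $\lambda$ grows without bound, which is exactly why (c) is conditional on $\mu(\widetilde X)=\mu(X)$.

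The missing ingredient is the Fukuda-module formalism. Uniform control makes every $\widetilde X$ a Fukuda module with the same parameters $(C_1,C_2,1)$, e.g.\ with $Z_n=\omega_n\widetilde X$, using $\omega_{n+1}\in\mathfrak m\,\omega_n$. Kleine's comparison theorem \cite[Theorem~4.11]{KleineCanad} then gives (a)--(c). This is precisely the paper's proof, via Lemma~\ref{thm:Fukudanbd}.

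\textbf{Uniformity at $v\mid p$.} Your remark that $\omega_p^{-k_v}$ is nontrivial on the inertia of every $\Z_p$-extension is false. For $K_\cyc$ one has $\omega_p(I_{\infty,w})\subseteq\mu_{p-1}$, so $\omega_p^{-k_v}$ is trivial there when $(p-1)\mid k_v$. That is why finiteness of $H^0(I_{\infty,w},A^-)$ is a hypothesis, and it is assumed only for $K_\infty$. The paper transfers it through the common layers. For $n\le r$ we have $K_n=\widetilde K_n$, so the inertia groups $\widetilde I_n$ and $I_n$ coincide, and $H^0(\widetilde I_n,A^-)=H^0(I_n,A^-)\subseteq H^0(I_{\infty,w},A^-)$. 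If $r$ exceeds the total-ramification level by more than $v_p|H^0(I_{\infty,w},A^-)|$, this increasing chain is constant between two consecutive ramified levels. Nakayama's lemma, applied to the dual of $H^0(\widetilde I_{\infty,\widetilde w},A^-)$, then gives $|H^0(\widetilde I_{\infty,\widetilde w},A^-)|\le|H^0(I_{\infty,w},A^-)|$. Your idea of comparing images of inertia under $\omega_p^{-k_v}$ can also be made to work through local class field theory, since the two local Artin maps $\calO_{K_v}^\times\to\Z_p$ agree modulo $p^r$. As written, however, it is only a plan.

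\textbf{Primes $v\nmid p$.} There are no Tamagawa-type terms here. The Greenberg condition is the unramified one and $\widetilde K_\infty/K$ is unramified at $v$, so $I_{n,v_n}=I_{\infty,w}$ and the local kernels vanish. You therefore do not need your Step 2 claim that the decomposition of bad primes is preserved. That claim is in fact false in $\calU(K_\infty,r)$: a prime split completely in $K_\infty$ need not split completely in $\widetilde K_\infty$. Controlling this is the role of $\mathcal W(S,K_\infty,r)$ in the fine Selmer setting.
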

We also prove a similar result for the strict Selmer groups. Whereas Kleine's results concern Selmer groups of ordinary abelian varieties, our framework applies to ordinary two-dimensional Galois representations and also treats strict, classical fine, and the Greenberg fine Selmer groups.

The second theme in this article is to study the Iwasawa invariants of the fine Selmer groups. Coates--Sujatha \cite{CS05} formally defined the fine Selmer group of an elliptic curve. They observed an important relation between the structure of $R(E/K_{\cyc})$ and Iwasawa's $\mu=0$ conjecture for $K_{\cyc}$.
\begin{theorem}[\cite{CS05},Theorem 3.4]
     Let $E/K$ be an elliptic curve and $p$ be an odd prime such that $K(E_{p^{\infty}})/K$ is a pro-$p$ extension. Then $R(E/K_{\cyc})^{\vee}$ is a finitely generated $\Z_p$ module if and only if Iwasawa's $\mu=0$ conjecture holds for $K_{\cyc}$.
\end{theorem}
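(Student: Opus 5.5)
The proof compares the fine Selmer group with the $p$-part of a class group over $K_{\cyc}$, using the hypothesis that $K(E_{p^\infty})/K$ is pro-$p$ to pass between $E$ and $\mu_{p^\infty}$. The route is through the field $L=K(E_{p^\infty})$, or more concretely $L=K(E_p)$. Set $L_{\cyc}$ for the cyclotomic $\Z_p$-extension of $L$. Let $S$ be the set of primes of bad reduction, the primes above $p$, and the archimedean primes. The argument has four steps.

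\textbf{Step 1 (base change to $L$).} Write $G=\mathrm{Gal}(L_{\cyc}/K_{\cyc})$, a finite $p$-group. Since $G$ is a $p$-group and $E_p$ is an $\F_p[G]$-module, it has nonzero invariants. I will show that the finite generation of $R(E/-)^\vee$ over $\Z_p$ can be checked over $L_{\cyc}$ instead of $K_{\cyc}$. One direction comes from the restriction map $R(E/K_{\cyc})\to R(E/L_{\cyc})^G$, whose kernel and cokernel are controlled by $H^1(G,E_{p^\infty}(L_{\cyc}))$, $H^2(G,-)$, and local analogues. These groups are finite because $G$ is finite. For the converse I use the standard fact that for a $\Z_p[G]$-module with $G$ a finite $p$-group, $\mu=0$ descends and ascends. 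Concretely, I will use the $\mu$-invariant formulation: $R^\vee$ is finitely generated over $\Z_p$ if and only if it is $\Lambda$-torsion with $\mu=0$. Over $L$, I will also use the result that $\mu=0$ for $L_{\cyc}$ is equivalent to $\mu=0$ for $K_{\cyc}$; this is Iwasawa's theorem that $\mu=0$ is preserved in $p$-extensions.

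\textbf{Step 2 (trivialise the module over $L_{\cyc}$).} Over $L_{\cyc}$ we have $E_p\subset E(L_{\cyc})$, so $E_p\cong(\Z/p)^2$ as a trivial Galois module. Then
\[
R(E_p/L_{\cyc})=\ker\Bigl(\Hom(G_S(L_{\cyc}),(\Z/p)^2)\to\prod_{w\in S}\Hom(G_w,(\Z/p)^2)\Bigr)=\Hom(Y,\Z/p)^2.
\]
Here $Y$ is the Galois group of the maximal abelian unramified $p$-extension of $L_{\cyc}$ that is completely split at all primes above $S$; this is $X'_\infty$, the $S$-split Iwasawa module. It differs from the unramified module $X_\infty$ only by a $\Z_p$-finitely generated quotient, since decomposition groups of primes of $S$ generate a finitely generated $\Z_p$-module in the cyclotomic tower. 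Hence $\mu(X'_\infty)=\mu(X_\infty)$. Iwasawa's $\mu=0$ for $L_{\cyc}$ is therefore equivalent to $Y/p$ being finite.

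\textbf{Step 3 (pass from mod $p$ to $p^\infty$).} I compare $R(E/L_{\cyc})[p]$ with $R(E_p/L_{\cyc})$ using the sequence $0\to E_p\to E_{p^\infty}\xrightarrow{p}E_{p^\infty}\to0$. The resulting kernel and cokernel are bounded by $E(L_{\cyc})_{p^\infty}/p$, which is finite, together with local error terms at the finitely many primes of $S$ in $L_{\cyc}$. The primes above $p$ and the bad primes are finitely decomposed in the cyclotomic tower, so there are only finitely many such primes. By Nakayama, $R^\vee$ is finitely generated over $\Z_p$ if and only if $R^\vee/p$ is finite, which holds if and only if $R[p]$ is finite. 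Combining with Step 2, this is equivalent to $Y/p$ being finite, which is $\mu=0$ for $L_{\cyc}$.

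\textbf{Main obstacle.} I expect the hardest part to be the local terms in Step 3, specifically showing that the kernel of $H^1(L_{\cyc,w},E_p)\to H^1(L_{\cyc,w},E_{p^\infty})$ is finite and uniformly controlled. I would handle this by noting that the kernel is $E(L_{\cyc,w})_{p^\infty}/p$, which has $\F_p$-dimension at most $2$, and that only finitely many $w$ occur.
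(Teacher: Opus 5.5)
The paper does not prove this statement. It is quoted from \cite{CS05} in the introduction purely as motivation, so there is no in-paper argument to set yours against. Judged on its own, your proof is correct.

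In Step 1, the restriction map $R(E/K_{\cyc})\to R(E/L_{\cyc})^G$ does have finite kernel and cokernel. The groups $H^i(G,E_{p^\infty}(L_{\cyc}))$ and their local analogues are finite, because $G$ is finite, the coefficients are cofinitely generated over $\Z_p$, and only finitely many primes lie above $S$. Finite generation over $\Z_p$ then lifts from $(R(E/L_{\cyc})^\vee)_G$ to $R(E/L_{\cyc})^\vee$ by Nakayama over the local ring $\Z_p[G]$. Its maximal ideal is $(p,I_G)$, with $I_G$ the augmentation ideal, because $G$ is a $p$-group. That is the precise content of your ``$\mu=0$ descends and ascends''. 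Iwasawa's theorem then moves $\mu=0$ along the Galois $p$-extension $L/K$, and Steps 2 and 3 are sound.

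Three small things to tidy:
\begin{enumerate}
\item $L$ must be $K(E_p)$, not $K(E_{p^\infty})$, for $G$ to be finite.
\item In Step 2 it is the \emph{kernel} $D$ of $X_\infty\to Y$ that is finitely generated over $\Z_p$, not a quotient. $D$ is generated by the decomposition groups above $S$, each a quotient of $\Z_p$ for $w\mid p$ and trivial for $w\nmid p$. This is what gives $\mu(Y)=\mu(X_\infty)$.
\item The remark that $E_p$ has nonzero $G$-invariants is never used.
\end{enumerate}

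For comparison, another standard route avoids the base change. The hypothesis forces $\mu_p\subset K$ via the Weil pairing, and makes $E_p$ an extension of $\Z/p$ by $\Z/p$ as a Galois module. Since $\mathrm{cd}_p\,G_S(K_{\cyc})\le 2$, this gives $H^2(G_S(K_{\cyc}),E_p)=0$ if and only if $H^2(G_S(K_{\cyc}),\mu_p)=0$. One then combines two criteria, both of which come out of Poitou--Tate duality:
\begin{itemize}
\item Iwasawa's criterion: given $\mu_p\subset K$, $\mu=0$ for $K_{\cyc}$ if and only if $H^2(G_S(K_{\cyc}),\mu_p)=0$.
\item Its fine Selmer analogue: $R(E/K_{\cyc})^\vee$ is finitely generated over $\Z_p$ if and only if $H^2(G_S(K_{\cyc}),E_p)=0$.
\end{itemize}
That route is shorter once those criteria are in hand. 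Yours uses only Nakayama-type arguments and class field theory, at the price of invoking Iwasawa's theorem on $\mu$ in $p$-extensions as a black box.
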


Fine Selmer groups have been studied extensively by Greenberg, Kurihara, and others. Kleine also studied their growth for abelian varieties with good ordinary reduction over $\mathbb{Z}_p$-extensions of $K$. In this article, we consider two types of fine Selmer groups. One of them is the usual fine Selmer group defined by Coates--Sujatha \cite{CS05}, and the other we call the Greenberg fine Selmer group (defined in \ref{defn:fineSelmer}). For a general two-dimensional Galois representation, the fine Selmer group depends on the choice of $S$, whereas the Greenberg fine Selmer group does not; for example, fine Selmer groups over function fields of characteristic $p$ need not be independent of $S$ \cite{GJS}.

For the classical and Greenberg fine Selmer groups, we establish uniform boundedness results for the associated Iwasawa invariants in neighbourhoods of $\Z_p$-extensions (Theorems \ref{Thm fine sel_1} and \ref{Gr fine selfukuda}).

The third theme of this article concerns the Iwasawa Main Conjecture, which predicts that the characteristic ideal of the dual Selmer group is generated by the associated $p$-adic $L$-function. Although the Iwasawa Main Conjecture has been studied extensively over the cyclotomic $\mathbb{Z}_p$-extension, it is natural to ask whether this relationship persists across nearby $\mathbb{Z}_p$-extensions. Building on the construction of $p$-adic $L$-functions over arbitrary $\mathbb{Z}_p$-extensions due to Disegni \cite{Disegni}, we show that, assuming the Iwasawa Main Conjecture over $K_\cyc$, the expected relation propagates to a Greenberg neighbourhood of the cyclotomic $\mathbb{Z}_p$-extension. Let $E/K$ be an elliptic curve with good ordinary reduction at prime $p$. For a $\Z_p$-extension $K_\infty$ with Galois group $\Gamma$, let $L_p^{(\Gamma)}(E)$ be the conjectural $p$-adic $L$-function defined in \cite{Disegni} satisfying the hypothesis \hyperref[Hypothesis]{$(L_p)$}. 

\begin{theorem}(Theorem \ref{thm:example})
Let $E/K$ be an elliptic curve with good ordinary reduction at $p$, and suppose that $\Sel^{\Gr}(E/K_\cyc)^\vee$ is a finitely generated torsion $\Lambda$-module with $\mu(\Sel^{\Gr}(E/K_\cyc)^\vee)=\lambda(\Sel^{\Gr}(E/K_\cyc)^\vee)=0$. Assume that the Iwasawa Main Conjecture holds for $K_\cyc$. Then there exists a neighbourhood $\calU(K_\cyc,m)$ of $K_\cyc$ such that, for every $\widetilde{K}_\infty\in \calU(K_\cyc,m)$, the characteristic ideal of $\Sel^{\Gr}(E/\widetilde{K}_\infty)^\vee$ is generated by the $p$-adic $L$-function $L^{\widetilde{\Gamma}}_\gp(E)$.
\end{theorem}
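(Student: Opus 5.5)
The idea is to transfer the vanishing of invariants from $K_\cyc$ to nearby $\Z_p$-extensions, and then show that both sides of the desired main-conjecture identity are units. The deformation results (Theorem \ref{prop:SelFukuda}) handle the algebraic side. The interpolation property in hypothesis \hyperref[Hypothesis]{$(L_p)$} handles the analytic side.

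\emph{Algebraic side.} Under our hypotheses, $\Sel^{\Gr}(E/K_\cyc)^\vee$ is torsion with $\mu=\lambda=0$, so it is finite. I would apply Theorem \ref{prop:SelFukuda} with $K_\infty=K_\cyc$ and $A=E_{p^\infty}$. One first checks its hypotheses for $V=V_pE$. Inertia at $v\mid p$ acts on $A^-$ (the reduction $\widetilde E_{p^\infty}$) trivially, which is $\omega_p^{-k}$ only in a twisted normalisation. The cleanest route is therefore to twist, or to use whatever form of the theorem the paper proves for elliptic curves.

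\emph{The cases $H^0\neq 0$ and infinite $H^0(I_{\infty,w},A^-)$.} If $H^0(G_K,A)\neq 0$, or $H^0(I_{\infty,w},A^-)$ is infinite, I would instead invoke the Fukuda-type argument directly. This argument compares $\Sel^{\Gr}(E/\widetilde K_\infty)$ and $\Sel^{\Gr}(E/K_\cyc)$ over the common layer $\widetilde K_m=K_m$ ($m$ large, with $U=\calU(K_\cyc,m)$). It uses control theorems with bounded kernels and cokernels.

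\emph{Conclusion of the algebraic side.} Theorem \ref{prop:SelFukuda}(a)--(c) gives that $\Sel^{\Gr}(E/\widetilde K_\infty)^\vee$ is torsion and $\mu\le 0$. Hence $\mu=0$, so (c) gives $\lambda\le 0$ and thus $\lambda=0$. Therefore $\Sel^{\Gr}(E/\widetilde K_\infty)^\vee$ is finite and its characteristic ideal is the unit ideal $\Lambda$.

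\emph{Analytic side.} By the Main Conjecture over $K_\cyc$, $L^{\Gamma_\cyc}_\gp(E)$ generates the unit ideal; in particular its image under the augmentation $\Lambda\to\calO$ is a $p$-adic unit. The hypothesis $(L_p)$ includes the interpolation formula at the trivial character, which is expressed in terms of $L(E,1)/\Omega_E$ and the Euler factors at $\gp\mid p$. This value is independent of the choice of $\Z_p$-extension. So $L^{\widetilde\Gamma}_\gp(E)$ has the same image under augmentation as $L^{\Gamma_\cyc}_\gp(E)$. An element of the local ring $\Lambda$ with unit constant term is a unit, so $L^{\widetilde\Gamma}_\gp(E)$ generates $\Lambda$, which equals the characteristic ideal computed in the algebraic side.

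\emph{Main obstacle.} I expect the hardest step to be the comparison of trivial-character values. Disegni's construction may carry normalisations (periods, modified Euler factors, and possibly a factor depending on $\widetilde\Gamma$ through its ramification at $p$) that are not literally independent of $\widetilde\Gamma$. If such a factor appears, I would shrink the neighbourhood so that $\widetilde K_\infty/K$ has the same ramification behaviour at each $\gp\mid p$ as $K_\cyc$. Such a shrinking is possible since $\calU(K_\cyc,m)$ fixes $K_m$, and it would make the extra factors agree up to units.
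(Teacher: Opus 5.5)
Your proposal is correct and is essentially the paper's proof. The paper also propagates $\mu=\lambda=0$ from $K_\cyc$ to a neighbourhood $\calU(K_\cyc,m)$ by citing Kleine's Fukuda-module result \cite[Theorem~4.11]{KleineCanad} directly, and then uses the interpolation formula at $\chi=\mathbbm{1}$ to get $L_p^{(\Gamma)}(E)(\mathbbm{1})=L_p^{(\widetilde\Gamma)}(E)(\mathbbm{1})$, a $p$-adic unit, so that $L_p^{(\widetilde\Gamma)}(E)$ is a unit. Direct appeal to Kleine is your fallback route, and it is the only one available: for a good ordinary elliptic curve inertia acts trivially on $A^-$, so $H^0(I_{\infty,w},A^-)$ is always infinite, and twisting by a power of $\omega_p$ would not help because $\omega_p$ does not factor through $\widetilde\Gamma$ when $\widetilde K_\infty\neq K_\cyc$.

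The obstacle you anticipate does not arise: at the trivial character the local factor is $(1-\alpha_\gp^{-1})^2$, which does not depend on $\widetilde\Gamma$. Both you and the paper tacitly assume that $L_p^{(\widetilde\Gamma)}(E)$ is integral when passing from a unit constant term to a unit.
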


\noindent We conclude with a numerical example (Example \ref{ex:IMC}) illustrating the setting of the theorem.

\subsection*{Organisation of the article}
The paper is organised as follows. Section~\ref{preli} recalls the required definitions and algebraic preliminaries. Section~\ref{Results on Sel} proves the control and boundedness results for Selmer and fine Selmer groups. Section~\ref{example} discusses the conjectural $p$-adic $L$-functions and the application to the Iwasawa Main Conjecture.
\section{Preliminaries}\label{preli}
In this section, we introduce the main definitions and recall the preliminaries needed for the subsequent sections. For a field $L$, we denote $\Gal( \overline{L}/L)$ by $G_L$ where $\overline{L}$ is the separable closure of $L$. Throughout we fix an algebraic closure $\overline{\Q}$ of $\Q$ in $\mathbb{C}$ and  embeddings $\iota_\infty:\overline{\Q}\rightarrow\mathbb{C}$ and  $\iota_\ell:\overline{\Q}\rightarrow\overline{\Q}_\ell$, for every finite rational prime $\ell$. 

Fix a prime $p\geq 3$ and let $K$ be a number field. Let $\calK$ be a finite extension of $\Q_p$ and $\calO_{\calK}$ be its ring of integers.  Let $T$ be a free  $\calO_{\calK}$-module of finite rank with a continuous linear action of $G_K$. Set
$$
V:=T\otimes_{\calO_{\calK}}\calK,\qquad
A:=V/T,\qquad
T^{*}:=\Hom(A,\mu_{p^\infty}),\qquad
V^{*}:=T^{*}\otimes_{\calO_{\calK}}\calK.
$$
For a prime $v$ of $K$, write $G_v:=\Gal(\overline{K}_v/K_v)$ and let $I_v\subset G_v$ be the inertia subgroup.

\noindent
\begin{definition}\cite{Greenberg89}
%\textbf{Definition (Ordinarity) \cite{Greenberg89}}
Let $v\mid p$ be a prime of $K$. We say that a $G_K$ representation $V$ is ordinary at $v$ if there exists a decreasing filtration $\{F^i(V)\}_{i\in\Z}$ by $\calK$-subspaces satisfying the following conditions:
\begin{enumerate}[\textbullet]
    
    \item  There exist integers $j_1 \le j_2$ such that
    \[
    F^i(V)=V \quad \text{for all } i\le j_1, \qquad
    F^i(V)=0 \quad \text{for all } i\ge j_2.
    \]
    \item Each $F^i(V)$ is $G_v$-stable, and $I_v$ acts on the graded piece
    $
      \gr^i(V):=F^i(V)/F^{i+1}(V)
    $
    via the $i$-th power of the cyclotomic character.
    \item Let $V^+ := F^{1}(V)$ (equivalently, $V^+=\bigcup_{i>0} F^i(V)$). Then $V^+$ inherits a filtration whose graded pieces are acted on by $I_v$ through positive powers of the cyclotomic character. Moreover, there is a short exact sequence of $G_v$-representations
    $$
      0\longrightarrow V^+\longrightarrow V\longrightarrow V^-\longrightarrow 0
    $$
    in which $V^-$ admits a filtration whose graded pieces carry the $I_v$-action via non-positive powers of the cyclotomic character. The filtration above is called the \emph{Panchishkin filtration} of $V$.
\end{enumerate}
\end{definition}

Define the associated lattices and quotients by
$$
T^+:=T\cap V^+,\qquad
T^-:=T/T^+,\qquad
A^+:=V^+/T^+,\qquad
A^-:=V^-/T^-.
$$

We make the following standing assumption:

\begin{assumption}\label{ass:global}
The representation $V$ is ordinary at every prime $v\mid p$ and unramified outside finitely many primes of $K$.
\end{assumption}

%\begin{remark}\label{rk_1}
%%Let $\rho$ be the Galois representation attached to a newform $f\in S_k(\Gamma_0(Np^t))$ which is ordinary at $p$. for a prime $v\mid p$, the action of $I_v$ on $A^-$ is via the cyclotomic character $\omega_p$. Then, as given in \cite{longovigni}, we can show that if there is a $\sigma\in I_{\infty, w}$ such that $\omega_p(\sigma)\ne 1$, then $H^0(I_{\infty, w}, A^-)=0$ and consequently $H^0(I_v, A^-)=0$.  
%\end{remark}}

\subsection{Selmer groups}\label{3.1}
Let $F$ be a finite extension of $K$. For a prime $v$ of $K$ define
 \[
 J^{\Gr}_v(A/F):=\begin{cases}
     \underset{w\mid v}\prod \frac{H^1(F_w,A)}{H^1_{\unram}(F_w,A)} \text{ if } v\nmid p, \\
     \underset{w\mid v}\prod \frac{H^1(F_w,A)}{H^1_{\ord}(F_w,A)} \text{ if } v\mid p,
\end{cases}
 \]
where
\[
H^1_\unram(F_w,A):=\ker \left(H^1(F_w,A)\lra H^1(I_w,A)   \right)\ \text{and}\ H^1_{\ord}(F_w,A):=\ker \left(H^1(F_w,A)\lra H^1(I_w, A^-) \right). 
\]

For a prime $v$ of $K$, define
\[
J^{\str}_v(A/F):=\begin{cases}
    \underset{w\mid v}\prod H^1(F_w,A) & \text{if } v\nmid p, \\
    \underset{w\mid v}\prod H^1(F_w,A^-) & \text{if } v\mid p.
\end{cases}
\]

\begin{definition}\label{defn:strict}
For $*\in\{\Gr,\str\}$, define the corresponding Selmer group of $A$ over $F$ by
\[
\Sel^{*}(A/F):=\ker\left(H^1(F,A)\lra \underset{v}\prod J^{*}_v(A/F)\right).
\]
Here, $v$ ranges over all primes of $K$.
\end{definition}

\subsection{Fine Selmer groups}\label{defn:fineSelmer}
We define two types of fine Selmer groups here. The first is the usual fine Selmer group defined in \cite{CS05}, and the second one we call the Greenberg fine Selmer group.
These fine Selmer groups are the subgroups of the usual Selmer groups.
Let $S$ be a finite set of places of $K$ containing all places above $p$, all archimedean places, and every finite place at which $V$ is ramified. Let $F_S$ be the maximal extension of $F$ unramified outside the places above $S$, and put $G_S(F):=\gal(F_S/F)$.  
\begin{definition}
    The $S$-fine Selmer group of $A$ over $F$ is defined as:
    \begin{align*}
    R_S(A/F):= \ker\left( H^1(G_S(F),A) \longrightarrow \underset{v\in S}\bigoplus P^1_v(A/F)\right), 
\end{align*}  
where $P^1_v(A/F):=\underset{w\mid v}\prod H^1(F_w, A). $ 
\end{definition}
\begin{definition}\label{defn:gr fine sel}
   The Greenberg fine Selmer group of $A$ over $F$ is defined as
\begin{align*}
    R^{\Gr}(A/F):= \ker\left( H^1(F,A) \longrightarrow \underset{v}\prod K^1_v(A/F)\right),
\end{align*}
where
\[
 K^1_v(A/F):=\begin{cases}
     \underset{w\mid v}\prod \frac{H^1(F_w,A)}{H^1_{\unram}(F_w,A)} \text{ if } v\nmid p, \\
     \underset{w\mid v}\prod H^1(F_w,A) \text{ if } v\mid p.
\end{cases}
 \]
 Here, the product runs over all the primes of $K$.
\end{definition}

Over infinite algebraic extensions, the definitions of the Selmer groups and the fine Selmer groups extend naturally by taking inductive limits.
Note that, in many cases, $R_S(A/F)$ coincides with $R^{\Gr}(A/F)$ (see, for instance, Remarks \ref{rem:3.12}, and \ref{rem:3.13}).

\subsection{Fukuda modules}\label{subsec:Fukudamodules}
In \cite{Kleine}, Kleine introduced a special family of Iwasawa modules $X = \varprojlim X_n$. These modules were designed to derive information about the projective limit by utilising data from a sufficiently large number of layers $X_n$.
\begin{definition}\label{defn:Fukudamodules}
Let $d\ge1$ be an integer, let $R:=\Z_p[[T_1,\ldots,T_d]]$, and let $\mathfrak m=(p,T_1,\ldots,T_d)$. Let $(X_n)_{n\in\N}$ be a projective system of finite $R$-modules of $p$-power order, put $X=\varprojlim_n X_n$, and assume that $X$ is compact for the $\mathfrak m$-adic topology. Write $\pr_n:X\to X_n$ for the projection and $Y_n=\ker(\pr_n)$.

Let $C_1,C_2,C_3$ be powers of $p$. We call $X$ a Fukuda $R$-module with parameters $(C_1,C_2,C_3)$ if there are compact $R$-submodules $Z_n\subset X$ such that, for every $n\in\N$,
\begin{enumerate}[(i)]
\item $|\coker(\pr_n)|\le C_1$;
\item $Z_{n+1}\subset \mathfrak m Z_n$;
\item $[Y_n:Y_n\cap Z_n]\le C_2$;
\item $[Z_n:Y_n\cap Z_n]\le C_3$.
\end{enumerate}
\end{definition}

Consider a $\mathbb{Z}_p^d$-extension $K_\infty/K$, where $d \geq 1$. Write $K_\infty = \underset{n}\bigcup K_n$ with $\Gal(K_n/K) \cong (\Z/p^n\Z)^d$. Let $X = \varprojlim X_n$, where $X_n$ is the $p$-primary subgroup of the ideal class group of $K_n$. Then, under appropriate assumptions on the ramification of primes in $K_\infty/K$,  one can show that $X$ is a Fukuda $R$-module with parameters $(1,1,1)$ (see \cite[Section \S 3]{KleineCanad}), where $R \cong \mathbb{Z}_p[[\Gal(K_\infty/K)]]$.

\begin{remark}
        The definition of Fukuda modules can be weakened such that the modules $Z_n$ satisfy the conditions in Definition \ref{defn:Fukudamodules} only for $n \geq e$ for some integer $e$ (cf. \cite[Remark 3.3]{KleineCanad}). In particular, the proof of \cite[Theorem 4.11]{KleineCanad} remains unchanged. We use this fact to generalise the result for general ordinary representations.  
\end{remark}

\subsection{Greenberg neighbourhoods}\label{Greenberg neighbourhoods}
Let $\mathcal{E}(K)$ denote the set of all $\mathbb{Z}_p$-extensions of $K$. 
Greenberg introduced a natural topology on $\mathcal{E}(K)$, defined as follows: 
for $L_\infty \in \mathcal{E}(K)$ and $n\in\mathbb{N}$, set
\[
\mathcal{E}(L_\infty,n)
   :=\{\,L_\infty' \in \mathcal{E}(K)
          \mid [L_\infty \cap L_\infty' : K] \ge p^n\,\}.
\]
The topology on $\mathcal{E}(K)$ is generated by the collection of all such sets 
$\mathcal{E}(L_\infty,n)$, with $L_\infty \in \mathcal{E}(K)$ and $n \in \mathbb{N}$.
Kleine later introduced a finer topology on $\mathcal{E}(K)$.
\begin{definition}[\cite{Kleine}]
For each $\mathbb{Z}_p$-extension $L_\infty/K$ and every $n\in\mathbb{N}$, define
\[
\cU(L_\infty,n)
   :=\{\,M_\infty \in \mathcal{E}(L_\infty,n)
          \mid P(M_\infty)\subset P(L_\infty)\,\},
\]
where $P(L_\infty)$ (resp.\ $P(M_\infty)$) denotes the set of primes of $K$ 
that ramify in $L_\infty/K$ (resp.\ $M_\infty/K$). Note that primes of $K$ lying above $p$ are the only primes that can occur in this set. We consider the topology on $\calE(K)$ that is generated by the sets $\calU(L_\infty,n)$. 
\end{definition}
\subsection{Iwasawa algebra}
Let $K_\infty\in \mathcal{E}(K)$ be  a $\Z_p$-extension of $K$, with $\Gamma:=\Gal(K_\infty/K)\cong \Z_p$. For each integer $n\ge0$, let $K_n$ be the subfield of $K_\infty$ satisfying $\Gal(K_n/K)\cong\Z/p^n\Z$, and set $\Gamma_n:=\Gal(K_\infty/K_n)$. Let $\Lambda$ denote the Iwasawa algebra $\Z_p[[\Gamma]]\cong \Z_p[[T]]$.
{ Let $M$ be a  finitely generated $\Lambda$-module. Then, by the structure theorem of finitely generated Iwasawa modules, there exists {a homomorphism of $\Lambda$-modules} 

\[ M \lra \Lambda^{r_M} \oplus \bigoplus_{i=1}^{s_M} \frac{\Lambda}{(f_i^{m_i})} \oplus \bigoplus_{j=1}^{t_M} \frac{\Lambda}{(p^{n_j})} \]
with a finite kernel and cokernel. Here $r_M,\ s_M,\ t_M,\ m_i,\ n_j $ are non-negative integers, which {depend} on $M$, and  $f_i$'s are distinguished irreducible polynomials in $\Lambda$ (see \cite[Chapter 5,\S3]{NSW} for definition). 
Furthermore, if $M$ is a  torsion $\Lambda$-module, we define the \emph{characteristic ideal} as:
\begin{align*}
    \mathrm{char}_\Lambda(M):=(\underset{j=1}{\overset{t_M}\prod}p^{n_j}.\underset{i=1}{\overset{s_M}\prod}f_i^{m_i} ),
\end{align*}
where the notation is as above. We define
\[
\mu(M):=\sum_{j=1}^{t_M}n_j,
\qquad
\lambda(M):=\sum_{i=1}^{s_M}m_i\deg(f_i).
\]

\section{Main results}{\label{Results on Sel}}
Let $K$ be a number field, and let $V$ be a two-dimensional representation of $G_K$ that is ordinary at every prime of $K$ above $p$ and unramified outside a finite set of primes. Let $T$ and $A$ be as in Section \S\ref{preli}. We first establish a control theorem for the associated Selmer groups over an arbitrary $\Z_p$-extension $K_\infty/K$.

\begin{proposition}\label{thm:controlSel}
Let $V$ be a two-dimensional representation of $G_K$ which is ordinary at all the primes above $p$.
Let $K_\infty$ be a $\mathbb{Z}_p$-extension of $K$. Suppose that

\begin{enumerate}[(i)]
    \item for every prime $v\mid p$, the action of $I_v$ on $A^-$ is via a negative power of the cyclotomic character.
    \item $H^0(G_K, A) = 0$.
\end{enumerate}

 Then the kernel of the restriction map 
\[
\Sel^{\mathrm{Gr}}(A/K_n)
   \overset{f_n}\longrightarrow
   \Sel^{\mathrm{Gr}}(A/K_\infty)^{\Gamma_n}
\]
is zero for every $n$. Furthermore, there exists $m>0$ such that $\coker(f_n)$ is finite and bounded independently of $n$ for all $n\ge m$. 
\end{proposition}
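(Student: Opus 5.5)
The plan is the standard Mazur-style control argument via the snake lemma. I will use the model $\Sel^{\Gr}(A/F)=\ker\bigl(H^1(G_S(F),A)\to\bigoplus_{v\in S}J^{\Gr}_v(A/F)\bigr)$, with $S$ containing the primes above $p$, the archimedean primes and the ramified primes of $V$. This is legitimate because outside $S$ the unramified condition forces classes to come from $G_S$; at infinite level the same holds after passing to the limit. This gives the commutative diagram
\[
\begin{array}{ccccccc}
0\to & \Sel^{\Gr}(A/K_n) & \to & H^1(G_S(K_n),A) & \to & \bigoplus_{v\in S} J^{\Gr}_v(A/K_n)\\
 & \downarrow f_n & & \downarrow g_n & & \downarrow h_n=\oplus h_{n,v}\\
0\to & \Sel^{\Gr}(A/K_\infty)^{\Gamma_n} & \to & H^1(G_S(K_\infty),A)^{\Gamma_n} & \to & \bigoplus_{v\in S} J^{\Gr}_v(A/K_\infty)
\end{array}
\]
The snake lemma gives $\ker f_n\subset \ker g_n$ and an exact sequence $0\to\ker f_n\to\ker g_n\to \ker h_n\to\coker f_n\to\coker g_n$. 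So it suffices to show three things: $\ker g_n=0$, $\coker g_n$ is finite and bounded, and $\ker h_n$ is finite and bounded for $n\ge m$.

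\textbf{Global terms.} By inflation-restriction, $\ker g_n=H^1(\Gamma_n,A^{G_{K_\infty}})$ and $\coker g_n\subset H^2(\Gamma_n,A^{G_{K_\infty}})=0$, since $\Gamma_n\cong\Z_p$ has cohomological dimension one. Now $A^{G_{K_\infty}}$ is a discrete $\calO_\calK$-module on which the pro-$p$ group $\Gamma$ acts, and its $\Gamma$-invariants $H^0(G_K,A)$ vanish. A nonzero discrete $p$-primary module with a continuous action of a pro-$p$ group has nonzero invariants, so $A^{G_{K_\infty}}=0$. Hence $\ker g_n=\coker g_n=0$, which gives $\ker f_n=0$ for all $n$ and $\coker f_n\hookrightarrow\ker h_n$. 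It remains to bound $\ker h_n$.

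\textbf{Local terms for $v\nmid p$.} Such primes are unramified in $K_\infty/K$. A prime $v\nmid p$ is finitely decomposed in any $\Z_p$-extension, so there are only finitely many primes $w$ of $K_\infty$ above each $v\in S$. Take $m$ large enough that every prime of $S$ has its decomposition numbers stabilised in $K_m$, and such that the ramified primes above $p$ are totally ramified in $K_\infty/K_m$. Then for $n\ge m$ the number of local factors is constant. For $w\nmid p$, the kernel of $H^1(K_{n,w},A)/H^1_{\unram}\to H^1(K_{\infty,w},A)/H^1_{\unram}$ is controlled by $H^1(\Gamma_{n,w},A^{I_w})$-type terms, that is by $H^1$ of the unramified $\Z_p$-quotient acting on $(A^{I_{\infty,w}})^{G_{K_{\infty,w}}}$. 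This group has order bounded by the order of the finite quotient $(A^{G_{K_{\infty,w}}})_{\Gamma_{n,w}}$, which is at most $|(A^{G_{K_{\infty,w}}})/\mathrm{div}|$. In the standard way (as for elliptic curves) this is independent of $n$. If $v$ splits completely in $K_\infty$, then $\Gamma_{n,w}$ is trivial and the kernel is zero.

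\textbf{Local terms for $v\mid p$; the main obstacle.} Here $J^{\Gr}_v$ injects into $\prod_w H^1(I_w,A^-)$ (after taking $G_w/I_w$-invariants). So $\ker h_{n,v}$ is bounded by the kernel of $H^1(K_{n,w},A^-)\to H^1(K_{\infty,w},A^-)$, corrected by the contribution of $H^1_{\unram}$. The first piece is $H^1(\Gamma_{n,w},(A^-)^{G_{K_{\infty,w}}})$. Hypothesis (i) says that $I_v$ acts on $A^-$ through $\omega_p^{-k}$ with $k>0$. Since $K_{\infty,w}$ contains at most a $\Z_p$-extension of $K_v$ and $\omega_p^{-k}$ has infinite order on inertia, $(A^-)^{G_{K_{\infty,w}}}$ and $(A^-)^{I_{\infty,w}}$ are finite. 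In fact $(A^-)^{I_{\infty,w}}$ is killed by a fixed power of $p$ determined by the valuation of $\omega_p^{k}(\sigma)-1$ for some $\sigma\in I_{\infty,w}$. This bounds $H^1(\Gamma_{n,w},\cdot)$ by $|(A^-)^{G_{K_{\infty,w}}}|$. The comparison between $H^1(I_w,A^-)$ over $K_n$ and over $K_\infty$ involves $H^1(I_{n,w}/I_{\infty,w},(A^-)^{I_{\infty,w}})$, which is likewise bounded by the finite group $(A^-)^{I_{\infty,w}}$ once $n\ge m$ (cyclic procyclic quotient).

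The delicate point is showing that these bounds are uniform in $n$. This requires keeping track of $G_w/I_w$-invariants and of the unramified versus ramified decomposition at $p$, and it is where the choice of $m$ enters. Summing the bounded contributions over the finitely many $w$ above $S$ gives $|\coker f_n|\le|\ker h_n|\le C$ for all $n\ge m$, with $C$ independent of $n$.
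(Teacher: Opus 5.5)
Your global step is correct and matches the paper: $A^{G_{K_\infty}}=0$ gives $\ker g_n=\coker g_n=0$, hence $\ker f_n=0$ and $\coker f_n\hookrightarrow\ker h_n$.

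\textbf{The gap is at the ramified primes $v\mid p$.} You assert that $(A^-)^{I_{\infty,w}}$ is finite because $\omega_p^{-k}$ has infinite order on $I_v$. This does not follow, since $I_v/I_{\infty,w}$ can be a copy of $\Z_p$ that absorbs the entire infinite part of $\omega_p^{-k}$. For instance, take $K_\infty=K_\cyc$. Then $\omega_p(G_{K_\cyc})\subseteq\mu_{p-1}$, so if $(p-1)\mid k$ (or $\mu_p\subset K$), $I_{\infty,w}$ acts trivially on $A^-$. In that case $(A^-)^{I_{\infty,w}}=A^-\cong\calK/\calO_\calK$ is infinite. The element $\sigma\in I_{\infty,w}$ with $\omega_p^{k}(\sigma)\neq 1$ that you invoke does not exist, and ``bounded by the finite group $(A^-)^{I_{\infty,w}}$'' gives nothing. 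This is precisely the situation you defer as the ``delicate point.''

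The missing argument (Case~2 in the paper) runs as follows. Let $\gamma$ be a topological generator of $I_{n,w}/I_{\infty,w}\cong\Z_p$. It acts on $A^-$ by the scalar $\omega_p^{-k}(\gamma)\neq 1$, because $\omega_p^{-k}$ still has infinite image on the open subgroup $I_{n,w}\subseteq I_v$ while killing $I_{\infty,w}$. So $\gamma-1$ is multiplication by a nonzero element of $\calO_\calK$ on the divisible module $A^-$, hence surjective, and $H^1(I_{n,w}/I_{\infty,w},A^-)=A^-/(\gamma-1)A^-=0$. Combine this with your bound $|H^1(\Z_p,B)|\le |B|$ in the complementary case where $B=(A^-)^{I_{\infty,w}}$ is finite. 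Since only finitely many primes of $K_n$ ramify in $K_\infty$, each finitely decomposed and totally ramified from $K_m$ on, this gives the uniform bound.

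\textbf{Your treatment of the unramified primes is also unsound as written, though the conclusion survives.}
\begin{itemize}
\item The inequality $|H^1(\Gamma_{n,w},M)|\le |M/M_{\mathrm{div}}|$ fails whenever $\Gamma_{n,w}$ fixes a divisible submodule of $M=A^{G_{K_{\infty,w}}}$. For general $A$ there is no analogue of the finiteness of $E(K_{n,w})[p^\infty]$ that makes the elliptic-curve argument work.
\item The group $\ker\bigl(H^1(K_{n,w},A)\to H^1(K_{\infty,w},A)\bigr)$ lies inside $H^1_{\unram}(K_{n,w},A)$, so it is not what controls $\ker h_{n,v}$ anyway.
\item A prime $v\nmid p$ need not be finitely decomposed in a $\Z_p$-extension. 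Primes inert in an imaginary quadratic field split completely in its anticyclotomic $\Z_p$-extension.
\end{itemize}

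The clean observation, used in the paper, is that $J^{\Gr}_v(A/K_n)$ injects into $\prod_w H^1(I_{n,w},A)$, or into $\prod_w H^1(I_{n,w},A^-)$ when $v\mid p$. At any prime unramified in $K_\infty/K_n$ one has $I_{n,w}=I_{\infty,w}$, so $\ker h_{n,v}=0$ outright, however $v$ decomposes. The same injection reduces the ramified primes above $p$ directly to $\prod_w H^1(I_{n,w}/I_{\infty,w},(A^-)^{I_{\infty,w}})$. There is then no ``first piece'' $H^1(\Gamma_{n,w},(A^-)^{G_{K_{\infty,w}}})$ to handle.
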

\begin{proof}

    Let $K\subset K_n\subset K_\infty$, such that $\Gamma_n:=\Gal(K_\infty/K_n)$. Now consider the following commutative diagram:

\begin{equation}
\begin{tikzcd}
	0 & {\Sel^{\Gr}(A/K_{\infty})^{\Gamma_n}} & {H^1(K_{\infty}, A)^{\Gamma_n}} &  {\underset{v}{\prod}} J^{\Gr}_v(A/K_\infty)^{\Gamma_{n,v_n}} \\
	0 & {\Sel^{\Gr}(A/K_n)} & {H^1(K_{n}, A)} & {\underset{v}\prod J^{\Gr}_v(A/K_n)}
	\arrow[from=1-1, to=1-2]
	\arrow[from=1-2, to=1-3]
	\arrow[from=1-3, to=1-4]
	\arrow[from=2-1, to=2-2]
	\arrow["f_n"', from=2-2, to=1-2]
	\arrow[from=2-2, to=2-3]
	\arrow["g_n"', from=2-3, to=1-3]
	\arrow[from=2-3, to=2-4]
	\arrow["h_n=\underset{v}\prod h_{n,v_n}"', from=2-4, to=1-4]
\end{tikzcd}
\end{equation}
By applying the snake lemma, it suffices to show that the kernels and cokernels of the map $g_n$ vanish for all $n$, and that the kernel of the map $h_n$ is finite and uniformly bounded for all $n \ge m$. 
Since $H^0(G_{K},A)=0$,  the Nakayama lemma gives us  that $H^0(G_{K_\infty},A)=0$ and  hence $\ker g_n=H^1(\Gamma_n, A^{G_{K_\infty}})=0$. Moreover, since $\Gamma_n$ has $p$-cohomological dimension $1$, we also have $\coker(g_n)=0$ for all $n$.   

\iffalse
For $\ker(g_n)$, an analogous argument as in  \cite[Lemma 3.1]{Greenberg97}, shows that it is finite and of order bounded as $n$ varies. The proof in \emph{loc. cit} is for Galois representations coming from elliptic curves, and the same argument works here in our situation.
\fi

We proceed to analyse the map $\ker(h_n)$.
Suppose that $v_n$ is a prime of $K_n$ ramified in $K_\infty$.
Then, by \cite[Lemma 13.3]{washingtonbook} $v_n \mid p$. We claim that $\ker(h_{n,v_n})$ is finite. Now, choose a prime $w\mid v_n$ of $K_\infty$ and put $\Gamma_{n,v_n}:=\gal(K_{\infty,w}/K_{n,v_n})$. Consider the following commutative diagram with exact rows,
\begin{equation}
    \begin{tikzcd}
	0 & {H^1_{\ord}(K_{\infty,w},A)}^{\Gamma_{n,v_n}} & {H^1(K_{\infty,w},A)}^{\Gamma_{n,v_n}} & {H^1(I_{\infty,w},A^-)}^{\Gamma_{n,v_n}} \\
	0 & {H^1_{\ord}(K_{n,v_n},A)} & {H^1(K_{n,v_n},A)} & {H^1(I_{n,v_n}, A^-)}.
    \arrow[from=1-1, to=1-2]
	\arrow[from=1-2, to=1-3]
	\arrow[from=1-3, to=1-4]
	\arrow[from=2-1, to=2-2]
	\arrow[from=2-2, to=1-2]
	\arrow[from=2-2, to=2-3]
	\arrow[from=2-3, to=1-3]
    \arrow[from=2-3, to=2-4]
    \arrow[from=2-4, to=1-4]
\end{tikzcd}
\end{equation}
The kernel of the rightmost vertical map is $H^1(I_{n,v_n}/I_{\infty,w}, (A^-)^{I_{\infty,w}})$. 
Now, we consider two possible cases:

\underline{Case 1:} Suppose $I_{\infty,w}$  acts non-trivially on  $A^-$. Therefore  $H^0(I_{\infty,w}, A^-)$ is finite. Let $B=H^0(I_{\infty,w}, A^-)$ and note that $H^1(\Gamma_{n,v_n},B)\cong \dfrac{B}{(\gamma-1)B}$, where $\gamma$ is the topological generator of $\Gamma_{n,v_n}$. This shows that $\ker h_{n, v_n}=H^1(I_{n,v_n}/I_{\infty,w}, (A^-)^{I_{\infty,w}})$ has order bounded above by $|H^0(I_{\infty,w},A^-)|$, and hence is uniformly bounded.

\underline{Case 2:} Suppose $I_{\infty,w}$ acts trivially on $A^-$, so $(A^-)^{I_{\infty,w}}=A^-$. Let $\gamma$ be a topological generator of $I_{n,v_n}/I_{\infty,w}$. By hypothesis, the action on the one-dimensional space $V^-$ is nontrivial; hence $\gamma-1$ is invertible and
\[
H^1(I_{n,v_n}/I_{\infty,w},V^-)
   \cong V^-/(\gamma-1)V^-=0.
\]
The long exact cohomology sequence attached to $0\to T^-\to V^-\to A^-\to0$, together with the fact that $I_{n,v_n}/I_{\infty,w}$ has $p$-cohomological dimension one, gives
$H^1(I_{n,v_n}/I_{\infty,w},A^-)=0$. Thus $\ker(h_{n,v_n})=0$ in this case.

Let $v_n$ be a prime of $K_n$ unramified in $K_\infty$.
Choose a prime $w \mid v_n$ of $K_{\infty}$ and put $\Gamma_{n,v_n}:=\gal(K_{\infty,w}/K_{n,v_n})$. Since $K_{\infty,w}/K_{n,v_n}$ is unramified, we have $I_{n,v_n}=I_{\infty,w}$, so $\ker(h_{n,v_n})=0$ for $v_n\nmid p$.
\iffalse
\begin{align*}
    H^1(I_{n,v_n}/I_{\infty,v_n},A^{I_{\infty,v_n}})=0.
\end{align*}

Now, consider the following commutative diagram:
\[\begin{tikzcd}
	0 & {H^1_{\unram}(K_{\infty,w},A)^{\Gamma_{n,v_n}}} & {H^1(K_{\infty,w},A)^{\Gamma_{n,v_n}}} & {H^1(I_{\infty,w},A)^{\Gamma_{n,v_n}}} \\
	0 & {H^1_{\unram}(K_{n,v_n},A)} & {H^1(K_{n,v_n},A)} & {H^1(I_{n,v_n},A)} & 0
	\arrow[from=1-1, to=1-2]
	\arrow[from=1-2, to=1-3]
	\arrow[from=1-3, to=1-4]
	\arrow[from=2-1, to=2-2]
	\arrow[from=2-2, to=1-2]
	\arrow[from=2-2, to=2-3]
	\arrow[from=2-3, to=1-3]
	\arrow[from=2-3, to=2-4]
	\arrow[from=2-4, to=1-4]
	\arrow[from=2-4, to=2-5]
\end{tikzcd}\]
\fi
When {Case(1)} occurs, we choose $m=1$, otherwise, we choose $m$ to be the smallest integer such that every prime $v_m$ of $K_m$ ramified in $K_{\infty}$ is totally ramified.
By Shapiro's lemma, we have that $\ker(h_n)$ is finite for every $n \geq m$. This completes the proof of the proposition.
\end{proof}
\begin{remark}
   Let $f$ be a newform of even weight $k \geq 4$ and level $\Gamma_0(N)$, with $p\nmid N$. Let $\Q_f$ denote the Hecke field of $f$, and fix a prime $\mathfrak{p}\mid p$ of $\Q_f$. Assume moreover that $f$ is $p$-ordinary and satisfies
$a_p(f)\not\equiv 1 \pmod{\mathfrak{p}}.$
Then, by \cite[pp.~444--445]{longovigni}, one has
$H^0(K_v,A)=0$
for every prime $v\mid p$ of $K$. Consequently, $H^0(G_K,A)=0,$
and therefore assumption~(ii) of Proposition~\ref{thm:controlSel} is satisfied in this setting.
\end{remark}
\begin{corollary}
Let $K_\infty$ be a $\mathbb{Z}_p$-extension of $K$.
Assume the hypotheses of Proposition  \ref{thm:controlSel}. Then $\Sel^{\Gr}(A/K_\infty)^\vee$
is a Fukuda-$\Lambda$-module with parameters $(C_1,C_2,1)$
for suitable $p$-power integers $C_1$ and $C_2.$
\end{corollary}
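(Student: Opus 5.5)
We take $X=\Sel^{\Gr}(A/K_\infty)^\vee$, $R=\Lambda$, $d=1$, and set $X_n:=\Sel^{\Gr}(A/K_n)^\vee$, with transition maps dual to restriction. There is one preliminary point: the definition of a Fukuda module asks the $X_n$ to be finite. If $\Sel^{\Gr}(A/K_n)$ is only cofinitely generated, we replace $X_n$ by the finite quotient $X/\omega_{n}X$ in the argument below; equivalently, we use $(\Gamma_n)$-coinvariants. Here $\omega_n=(1+T)^{p^n}-1$.

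By Proposition \ref{thm:controlSel}, the map $f_n$ is injective and $|\coker(f_n)|$ is bounded by a constant $C$ for $n\ge m$. Dualizing gives exact sequences
\[
0\to \coker(f_n)^\vee\to X_{\Gamma_n}=X/\omega_nX\to X_n\to 0 .
\]
Thus $X\to X_n$ is surjective, so $\coker(\pr_n)=0$ and condition (i) holds with $C_1=1$. If we instead work with the finite layers $X/\omega_n X$, (i) is trivial. For $n<m$ we invoke the Remark allowing the conditions to hold only for $n\ge e$ with $e=m$. Either way, $Y_n=\ker(\pr_n)$ satisfies $[Y_n:\omega_nX]\le C$.

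The natural choice is $Z_n:=\omega_n X$. Condition (ii) follows from $\omega_{n+1}=\omega_n\cdot\nu_{n+1,n}$, where $\nu_{n+1,n}=\sum_{i=0}^{p-1}(1+T)^{ip^n}\in\mathfrak m=(p,T)$. Hence $Z_{n+1}=\nu_{n+1,n}Z_n\subset\mathfrak m Z_n$. Since $Z_n\subset Y_n$, we get $Y_n\cap Z_n=Z_n$. Condition (iv) then holds with $C_3=1$, and condition (iii) becomes $[Y_n:Z_n]=|\coker(f_n)^\vee|\le C$, so we take $C_2$ to be the $p$-power bound $C$. Compactness of $X$ in the $\mathfrak m$-adic topology follows because $X$ is finitely generated over $\Lambda$: $\Sel^{\Gr}(A/K_\infty)$ is cofinitely generated, as a subgroup of $H^1(G_S(K_\infty),A)$ after noting that the local conditions force unramifiedness outside $S$.

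The main obstacle is bookkeeping rather than anything deep. First, we must reconcile finiteness of the $X_n$ with the possibly infinite $\Sel^{\Gr}(A/K_n)^\vee$; we handle this as above, so that $C_1$ absorbs any discrepancy. Second, we must ensure that the bound on $\coker(f_n)$ from Proposition \ref{thm:controlSel} is truly uniform in $n\ge m$; it is, since in Case 1 each local kernel is bounded by $|H^0(I_{\infty,w},A^-)|$ and only finitely many primes ramify. This gives parameters $(C_1,C_2,1)$ as claimed.
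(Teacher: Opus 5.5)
Your argument is the one the paper intends. Its proof takes $X_n=\Sel^{\Gr}(A/K_n)^\vee$ and defers to Kleine's Corollary~4.2, which amounts to the same construction: $Z_n=\omega_nX$, with $C_1=1$ from the injectivity of $f_n$, $C_3=1$ from $Z_n\subset Y_n$, and $C_2$ the uniform bound on $\coker(f_n)$ for $n\ge m$. Your checks of (ii), via $\omega_{n+1}=\nu_{n+1,n}\,\omega_n$ with $\nu_{n+1,n}\in(p,T)$, and of (iii), via the dualized control sequence, are correct.

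The remark in your first paragraph is wrong and should be dropped: $X/\omega_nX$ is not a finite quotient in general. Your own exact sequence $0\to\coker(f_n)^\vee\to X/\omega_nX\to\Sel^{\Gr}(A/K_n)^\vee\to0$ shows that, for $n\ge m$, $X/\omega_nX$ is finite exactly when $\Sel^{\Gr}(A/K_n)$ is. If $X$ is not $\Lambda$-torsion, which the corollary does not assume, $X/\omega_nX$ is infinite for every $n$. So the substitution does not meet the finiteness requirement in the definition, and ``$C_1$ absorbs any discrepancy'' has no content.

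The substitution would also be the wrong move. If the $X_n$ could be chosen freely, the notion would be empty: $X_n=X/\mathfrak m^nX$ and $Z_n=\mathfrak m^nX$ give parameters $(1,1,1)$ for any finitely generated $X$. What Kleine's Theorem~4.11 later uses is that the layers are the arithmetic groups $\Sel^{\Gr}(A/K_n)^\vee$, which coincide for $K_\infty$ and $\widetilde K_\infty$ whenever $K_n=\widetilde K_n$.

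The paper's proof glosses over the same finiteness point. The honest fix is to keep $X_n=\Sel^{\Gr}(A/K_n)^\vee$ and either assume these Selmer groups are finite or read the definition with the $X_n$ finitely generated over $\Z_p$.
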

\begin{proof}
Taking $X_n^{(K_{\infty})}:=\Sel^{\Gr}(A/K_n)^{\vee}$ and $X^{(K_{\infty})}:=\Sel^{\Gr}(A/K_{\infty})^{\vee}$, and considering the natural projection map $\mathrm{pr}_n:X^{(K_{\infty})}\to X_n^{(K_{\infty})}$, the proof proceeds by arguments analogous to \cite[Corollary 4.2]{KleineCanad}.

\end{proof}
\noindent The following Lemma is a generalisation of \cite[Theorem 4.5]{KleineCanad}.
\begin{lemma}\label{thm:Fukudanbd}
   Let $K$ be a number field and $K_\infty$ be a $\Z_p$-extension of $K$. Let us assume the hypotheses of Proposition  \ref{thm:controlSel}. Further, suppose that, for every $v\in P(K_\infty)$ and every prime $w\mid v$ of $K_\infty$, the group $H^0(I_{\infty,w},A^-)$ is finite.
Then there exist an integer $r\ge1$ and powers of $p$, $C_1,C_2\ge1$, such that $\Sel^{\Gr}(A/\widetilde{K}_\infty)^{\vee}$ is a Fukuda $\Lambda$-module with bounded parameters $(C_1, C_2, 1)$ for $\widetilde{K}_\infty\in \calU(K_\infty,r)$.
\end{lemma}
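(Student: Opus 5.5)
We will follow the strategy of \cite[Theorem 4.5]{KleineCanad}, replacing the class-group control by Proposition~\ref{thm:controlSel}. The essential point is that every constant produced in the proof of Proposition~\ref{thm:controlSel} can be bounded in terms of data attached to $K_\infty$ alone. These data are the orders $|H^0(I_{\infty,w},A^-)|$ for $v\in P(K_\infty)$, and the number of primes above $P(K_\infty)$ in a fixed layer. They do not depend on $\widetilde K_\infty$ once $\widetilde K_\infty$ is close to $K_\infty$.

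\emph{Step 1: choice of $r$.} Choose $r$ so large that every prime $v\in P(K_\infty)$ is totally ramified in $K_\infty/K_r$. Let $\widetilde K_\infty\in\calU(K_\infty,r)$. Then $\widetilde K_r=K_r$, since both are the unique degree $p^r$ subextension of $K_\infty\cap\widetilde K_\infty$. We also have $P(\widetilde K_\infty)\subset P(K_\infty)$, and every ramified prime of $\widetilde K_\infty$ lies above $p$. Enlarging $r$ if necessary, we may assume that each $v\in P(\widetilde K_\infty)$ has at most $g_v$ primes above it in $\widetilde K_\infty$, where $g_v$ is the number of primes of $K_r$ above $v$. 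This holds because the decomposition groups of ramified primes in $\widetilde\Gamma$ are open, and we may work with $n\ge r$ as allowed by the weakened notion of Fukuda module.

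\emph{Step 2: uniform control.} Rerun the proof of Proposition~\ref{thm:controlSel} for $\widetilde K_\infty$. The maps $\widetilde g_n$ are isomorphisms, because $H^0(G_K,A)=0$. For the local kernels at a ramified prime $w\mid v$ of $\widetilde K_\infty$, we bound $|\ker\widetilde h_{n,v_n}|$ by $|H^0(\widetilde I_{\infty,w},A^-)|$.

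To make this bound uniform, compare inertia groups. Since $I_v$ acts on $A^-$ through a character $\chi_v=\omega_p^{-k_v}$, the invariants $H^0(\widetilde I_{\infty,w},A^-)$ are determined by the image $\chi_v(\widetilde I_{\infty,w})$. The composite $I_v\to\Gamma\times\widetilde\Gamma$ has both projections of finite index on inertia. We will show that $\chi_v(\widetilde I_{\infty,w})$ and $\chi_v(I_{\infty,w})$ generate subgroups of $\calO_\calK^\times$ of comparable index, with the comparison controlled by $p^{r}$. In Case 1 of Proposition~\ref{thm:controlSel}, this gives $|H^0(\widetilde I_{\infty,w},A^-)|\le p^{c}|H^0(I_{\infty,w},A^-)|$ with $c$ depending only on $r$ and $K_\infty$. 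The finiteness hypothesis on $H^0(I_{\infty,w},A^-)$ is what ensures that we are in Case 1 and that this bound is finite.

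Taking the product over at most $\sum_v g_v$ primes gives a bound $|\coker\widetilde f_n|\le C$ for all $n\ge r$ and all $\widetilde K_\infty\in\calU(K_\infty,r)$, with $C$ independent of $\widetilde K_\infty$. The kernels $\ker\widetilde f_n$ vanish.

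\emph{Step 3: Fukuda structure.} Dualise. The map $\widetilde X=\Sel^{\Gr}(A/\widetilde K_\infty)^\vee\to(\widetilde X)_{\widetilde\Gamma_n}$ is an isomorphism, and $\mathrm{pr}_n$ has cokernel of order $|\ker\widetilde f_n|=1\le C_1$. Its kernel is $\omega_n\widetilde X$ up to a finite group of order at most $C$. Set $Z_n:=\omega_n\widetilde X$, where $\omega_n=\gamma^{p^n}-1$. Then $Z_{n+1}\subset\mathfrak m Z_n$, since $\omega_{n+1}/\omega_n\in\mathfrak m$. Moreover, $Z_n\subset Y_n$, so $[Z_n:Y_n\cap Z_n]=1$, while $[Y_n:Y_n\cap Z_n]\le C=:C_2$. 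This is precisely the argument of \cite[Corollary 4.2]{KleineCanad}, now carried out with constants uniform in $\widetilde K_\infty$.

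\emph{Main obstacle.} The delicate step is the uniform bound on $|H^0(\widetilde I_{\infty,w},A^-)|$ in Step 2, that is, controlling how the inertia image in $\widetilde\Gamma$ sits relative to the one in $\Gamma$. I would first try to prove it via the explicit character $\omega_p^{-k_v}$. Since $K_\infty$ and $\widetilde K_\infty$ share the layer $K_r$, the inertia subgroups $I_{\infty,w}$ and $\widetilde I_{\infty,w}$ both contain the inertia of the compositum and have index at most a bounded power of $p$ in $I_{r,v}$. This should bound the fixed points of $\chi_v$ uniformly.
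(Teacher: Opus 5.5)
\textbf{There is a genuine gap, in Step 2.} Your overall architecture matches the paper's. You prove a control theorem for $\Sel^{\Gr}(A/\widetilde K_n)\to\Sel^{\Gr}(A/\widetilde K_\infty)^{\widetilde\Gamma_n}$ with constants uniform on a neighbourhood, and feed it into the argument of \cite[Corollary 4.2]{KleineCanad} with $Z_n=\omega_n\widetilde X$. Step 3 is fine, apart from the slip that $\widetilde X\to\widetilde X_{\widetilde\Gamma_n}$ is a surjection, not an isomorphism.

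Step 2 is the only genuinely new input, and there the uniform bound on $|H^0(\widetilde I_{\infty,\widetilde w},A^-)|$ is announced but never proved. The mechanism you propose for it is false. Since $v$ is totally ramified in $K_\infty/K_r$ and in $\widetilde K_\infty/\widetilde K_r$, both $I_{r,v}/I_{\infty,w}$ and $I_{r,v}/\widetilde I_{\infty,\widetilde w}$ are isomorphic to $\Z_p$. So neither group has finite index in $I_{r,v}$, and ``bounded index'' gives nothing.

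Sharing the layer $K_r$ only says that the two maps $I_v\to\Gal(K_r/K)$ coincide. This controls $\chi_v(\widetilde I_{\infty,\widetilde w})$ only when $r$ is large \emph{compared with} $|H^0(I_{\infty,w},A^-)|$. The dependence runs the other way from what you write, and your Step 1 never ties $r$ to this quantity.

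With $r$ chosen only for total ramification, your estimate fails. Take $K$ imaginary quadratic with $p$ inert, $(p-1)\mid k_v$, and $K_\infty\neq K_{\cyc}$ agreeing with $K_{\cyc}$ up to a layer $K_N$ with $N>r$. One checks via local class field theory that $H^0(I_{\infty,w},A^-)$ is finite. But $\calU(K_\infty,r)$ contains $K_{\cyc}$ itself, on whose inertia $\chi_v$ is trivial. So finiteness at $K_\infty$ does not put every $\widetilde K_\infty$ in Case 1, contrary to your claim. It also contains extensions for which $H^0(\widetilde I_{\infty,\widetilde w},A^-)$ is finite of arbitrarily large order, so the local kernels are not uniformly bounded.

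\textbf{The missing idea is a Fukuda-type stabilisation of the local invariants, and it is what dictates $r$.} This is how the paper closes the step.
\begin{itemize}
\item Let $e$ be a level beyond which every $v\in P(K_\infty)$ is totally ramified. Put $r_v=v_p(|H^0(I_{\infty,w},A^-)|)+1$ and take $\widetilde K_\infty\in\calU(K_\infty,e+r_v)$.
\item For $e\le n\le e+r_v$ we have $\widetilde K_n=K_n$, hence $H^0(\widetilde I_{n,v_n},A^-)=H^0(I_{n,v_n},A^-)$. Since $I_{\infty,w}\subset I_{n,v_n}$, this group has order at most $|H^0(I_{\infty,w},A^-)|\le p^{r_v-1}$.
\item These $r_v+1$ orders form a non-decreasing sequence of powers of $p$ lying between $1$ and $p^{r_v-1}$. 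Hence two consecutive ones coincide.
\item Write $B=H^0(\widetilde I_{\infty,\widetilde w},A^-)$ and let $\gamma$ generate $\widetilde I_{n,v_n}/\widetilde I_{\infty,\widetilde w}\cong\Z_p$. The coincidence says $B^{\gamma}=B^{\gamma^p}$, i.e.\ $(\gamma-1)B^\vee=\nu(\gamma-1)B^\vee$ with $\nu=1+\gamma+\dots+\gamma^{p-1}$ in the maximal ideal.
\item Nakayama gives $(\gamma-1)B^\vee=0$, so $B=B^\gamma$ and $|B|\le|H^0(I_{\infty,w},A^-)|$.
\end{itemize}
In particular $B$ is finite, and $|\ker\widetilde h_{n,v_n}|\le|H^0(I_{\infty,w},A^-)|$ for all $n$ and all $\widetilde K_\infty$ in the neighbourhood. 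Take $r\ge e+\max_v r_v$, and also large enough for your count of primes above $v$. With that choice, the rest of your argument goes through.
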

\begin{proof}
Following the strategy of \cite[Theorem 4.5]{KleineCanad}, it is enough to show that the kernel and cokernel of  the map $\Sel^{\Gr}(A/\widetilde{K}_n)\overset{f_n^{\widetilde{K}_{\infty}}}\lra \Sel^{\Gr}(A/\widetilde{K}_\infty)^{\Gamma_n}
$ is finite and bounded independently of $n$ for every $n>m$ and every $\widetilde{K}_\infty\in \calU(K_\infty,r)$ for suitable choices of $r$ and $m$.
For each $n>0$, we consider the following commutative diagram:
\begin{equation}
\begin{tikzcd}
	0 & {\Sel^{\Gr}(A/\widetilde{K}_{\infty})^{\Gamma_n}} & {H^1(\widetilde{K}_{\infty}, A)^{\Gamma_n}} &  {\underset{v}{\prod}} J^{\Gr}_v(A/\widetilde{K}_\infty)^{\Gamma_{n,v_n}} \\
	0 & {\Sel^{\Gr}(A/\widetilde{K}_n)} & {H^1(\widetilde{K}_{n}, A)} & {\underset{v}\prod J^{\Gr}_v(A/\widetilde{K}_n)}
	\arrow[from=1-1, to=1-2]
	\arrow[from=1-2, to=1-3]
	\arrow[from=1-3, to=1-4]
	\arrow[from=2-1, to=2-2]
	\arrow["f_n^{\widetilde{K}_{\infty}}"', from=2-2, to=1-2]
	\arrow[from=2-2, to=2-3]
	\arrow["g_n^{\widetilde{K}_{\infty}}"', from=2-3, to=1-3]
	\arrow[from=2-3, to=2-4]
	\arrow["h_n^{\widetilde{K}_{\infty}}=\underset{v}\prod h_{n,v_n}^{\widetilde{K}_{\infty}}"', from=2-4, to=1-4]
\end{tikzcd}
\end{equation}
From the proof of Proposition \ref{thm:controlSel} we see that $\ker(f_n^{\widetilde{K}_{\infty}})=0$ for all $n$. 
Next, we give a uniform bound for $\ker(h_{n,v_n}^{\widetilde{K}_{\infty}})$ for each $v_n$. If $v_n$ is a prime of $K_n$ unramified in $\widetilde{K}_\infty$, then arguing as in the proof of Proposition \ref{thm:controlSel}, we get that $\ker{h_{n,v_n}^{\widetilde{K}_{\infty}}}=0$. Note that if $\widetilde{K}_\infty \in \mathcal U(K_\infty,r)$ for some $r>0$, then any prime $v_n$ that is ramified in $\widetilde{K}_\infty$ must already be ramified in $K_\infty$, since $P(\widetilde{K}_\infty)\subset P(K_\infty)$ by the definition of $\mathcal U(K_\infty,r)$. 

Fix a prime $v$ of $K$ that is ramified in $\widetilde{K}_\infty$, in particular $v\mid p$. Choose a prime $\widetilde{w}\mid v$ of $\widetilde{K}_\infty$ and let $\widetilde{I}_{\infty,\widetilde{w}}$ be the inertia subgroup of $G_{\widetilde{K}_{\infty,\widetilde{w}}}$. As in the proof of Proposition \ref{thm:controlSel}, $H^0(\widetilde{I}_{\infty,\widetilde{w}}, A^-)$ is either finite or equal to $A^-$. 

Suppose $H^0(\widetilde{I}_{\infty,\widetilde{w}}, A^-)$ is finite. Then, we follow a similar strategy as in the proof of [\emph{loc. cit}]. Consider $\widetilde{K}_{\infty}\in \calU(K_\infty, r_v)$, where $r_v=v_p(|H^0(I_{\infty,w},A^-)|)+1$ and $w\mid v$ is a prime of $K_\infty$.
In particular, 
\begin{align*}
    |H^1(\Gamma_{n,v_n}, H^0(\widetilde{I}_{\infty,\widetilde{w}},A^-))|= |H^0(\Gamma_{n,v_n}, H^0(\widetilde{I}_{\infty,\widetilde{w}},A^-))| \leq|H^0(\Gamma_{n,v_n},H^0(I_{\infty,w},A^-))| \leq |H^0(I_{\infty,w},A^-)|
\end{align*} for each $n \leq v_p(|H^0(I_{\infty},A^-)|)+1$. So, $|H^1(\Gamma_{n,v_n}, H^0(\widetilde{I}_{\infty,\widetilde{w}},A^-))|=|H^1(\Gamma_{n+1,v_{n+1}}, H^0(\widetilde{I}_{\infty,\widetilde{w}},A^-))|$ for some $n \leq v_p(|H^0(I_{\infty},A^-)|)+1$ and by Nakayama's Lemma $|H^0(\widetilde{I}_{\infty,\widetilde{w}},A^-)| \leq |H^0(I_{\infty,w},A^-)|$. This means that
\begin{align*}
    |H^1(\Gamma_{n,v_n},H^0(\widetilde{I}_{\infty,\widetilde{w}},A^-))| \leq |H^0(I_{\infty,w},A^-)|
\end{align*}  for each $n \in \mathbb{N}$. 
Therefore, we get a uniform bound on $\ker{h_{n,v}^{\widetilde{K}_{\infty}}}$ for every $\widetilde{K}_{\infty}\in \calU(K_\infty, r_v)$, where $r_v=v_p(|H^0(I_{\infty,w},A^-)|)+1$, where $w\mid v$ is a prime of $K_\infty$.

Now, suppose that $H^0(\widetilde{I}_{\infty,\widetilde{w}}, A^-)=A^-$. Choose $r>0$ such that every prime of $K_r$ that ramifies in $K_\infty$ is totally ramified in $K_\infty$. Let $\widetilde{K}_\infty\in \calU(K_\infty, r+1)$, then every prime of $\widetilde{K}_r$, that is ramified in $\widetilde{K}_\infty$ is  totally ramified. Following the arguments in Case 2 of the proof of Proposition \ref{thm:controlSel}, we get  $\ker{h}^{\widetilde{K}_{\infty}}_{t,v_t}=0$ for every $t\geq r+1$ and for every prime $v_t\mid v$ of $\widetilde{K}_t$. 

Now, by applying snake lemma it is easy to see that for every $\widetilde{K}_\infty\in \calU(K_\infty,r_0)$, where $r_0>\text{max} \{r_v| v \text{ is a prime of } K \text{ ramified in } K_\infty\}$, the kernel and cokernel of the map $f_n^{\widetilde{K}_{\infty}}$ are finite and uniformly bounded for every $n>m$. Here, $m>0$ is chosen so that every prime of $K_m$ that ramifies in $K_\infty$ is totally ramified.
\end{proof}
\begin{proposition}\label{control strsel}
    Let $K$ be a number field and let $K_\infty/K$ be a $\Z_p$-extension. Assume the hypotheses of Lemma~\ref{thm:Fukudanbd}. Fix $\widetilde K_\infty\in\calU(K_\infty,r)$, with $r$ as in that lemma, write $\widetilde K_n$ for its $n$-th layer, and set $\widetilde\Gamma_n:=\Gal(\widetilde K_\infty/\widetilde K_n)$. Then the kernel of the restriction map 
\[
\Sel^{\mathrm{str}}(A/\widetilde{K}_n)
   \overset{f^*_n}\longrightarrow
   \Sel^{\mathrm{str}}(A/\widetilde{K}_\infty)^{\widetilde\Gamma_n}
\]
is finite and uniformly bounded as $n$ varies. Further, there exists $m>0$ such that $\coker(f^*_n)$ is bounded independent of $n$ for all $n \ge m$.
\end{proposition}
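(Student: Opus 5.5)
We mimic the proofs of Proposition~\ref{thm:controlSel} and Lemma~\ref{thm:Fukudanbd}, now with the strict local conditions. Consider the commutative diagram with exact rows
\[
0\to \Sel^{\str}(A/\widetilde K_\infty)^{\widetilde\Gamma_n}\to H^1(\widetilde K_\infty,A)^{\widetilde\Gamma_n}\to \prod_v J^{\str}_v(A/\widetilde K_\infty)^{\widetilde\Gamma_{n,v_n}}
\]
over
\[
0\to \Sel^{\str}(A/\widetilde K_n)\to H^1(\widetilde K_n,A)\to \prod_v J^{\str}_v(A/\widetilde K_n),
\]
with vertical maps $f^*_n$, $g_n$ and $h^*_n=\prod_v h^*_{n,v_n}$. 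The maps $g_n$ are as before. Since $H^0(G_K,A)=0$ and $\Gal(\widetilde K_\infty/K)$ is pro-$p$, Nakayama gives $A^{G_{\widetilde K_\infty}}=0$. Hence $\ker g_n=0$, and $\coker g_n=0$ because $\operatorname{cd}_p\widetilde\Gamma_n=1$. The snake lemma then gives $\ker f^*_n=0$ (in particular finite and bounded) and $\coker f^*_n\hookrightarrow\ker h^*_n$. So everything reduces to bounding $\ker h^*_{n,v_n}$ uniformly in $n\ge m$, and we only need to handle the places that actually matter, which are those in $S$ (outside $S$ the image of $H^1(\widetilde K_n,A)$ only meets unramified classes, or one may work with $G_S$-cohomology as usual).

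For $v\nmid p$, the kernel of $H^1(\widetilde K_{n,v_n},A)\to H^1(\widetilde K_{\infty,\widetilde w},A)$ is $H^1(\widetilde\Gamma_{n,v_n},A^{G_{\widetilde K_{\infty,\widetilde w}}})$. If $v$ splits completely in $\widetilde K_\infty$ this vanishes. Otherwise $\widetilde K_{\infty,\widetilde w}$ is the unramified $\Z_p$-extension of $K_v$. The group $H^1(\widetilde\Gamma_{n,v_n},B)$ with $B=A^{G_{\widetilde K_{\infty,\widetilde w}}}$ is a quotient $B/(\gamma-1)B$. Its order is bounded by the size of $B_{\mathrm{div}}$ coinvariants plus the finite part, and the standard Greenberg argument bounds it by $|B/B_{\mathrm{div}}|$, independent of $n$. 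One needs uniformity over all $\widetilde K_\infty\in\calU(K_\infty,r)$, but for the proposition $\widetilde K_\infty$ is fixed, so this suffices. The number of primes $v_n\mid v$ is bounded because primes not split completely in $\widetilde K_\infty$ are finitely decomposed.

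For $v\mid p$ the local map is $H^1(\widetilde K_{n,v_n},A^-)\to H^1(\widetilde K_{\infty,\widetilde w},A^-)$, with kernel $H^1(\widetilde\Gamma_{n,v_n},(A^-)^{G_{\widetilde K_{\infty,\widetilde w}}})$. Here we use the hypothesis that $I_v$ acts on $A^-$ by $\omega_p^{-k_v}$. If $v$ ramifies in $\widetilde K_\infty$, then $v\in P(K_\infty)$, and the finiteness of $H^0(I_{\infty,w},A^-)$ transfers to $H^0(\widetilde I_{\infty,\widetilde w},A^-)$ with the bound $|H^0(\widetilde I_{\infty,\widetilde w},A^-)|\le|H^0(I_{\infty,w},A^-)|$, exactly as in Lemma~\ref{thm:Fukudanbd} for $r$ chosen there. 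Then $(A^-)^{G_{\widetilde K_{\infty,\widetilde w}}}$ is finite, and its $H^1$ under a procyclic group is bounded by its order. If instead $v$ is unramified in $\widetilde K_\infty$, then $I_v$ acts on $A^-$ through a nontrivial character of infinite order on $V^-$. Hence $(A^-)^{G_{\widetilde K_{\infty,\widetilde w}}}\subseteq (A^-)^{I_v}$, which is finite and independent of $n$, and the same bound applies. Choosing $m$ so that ramified primes of $\widetilde K_m$ are totally ramified, which is possible since $\widetilde K_\infty\in\calU(K_\infty,r)$ inherits this from $K_\infty$, bounds the number of primes above $p$ in $\widetilde K_n$ for $n\ge m$. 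This gives the uniform bound on $\coker f^*_n$.

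The main obstacle is the case $v\nmid p$ in $S$. Unlike the Greenberg condition, the strict condition does not quotient by unramified classes, so $\ker h^*_{n,v_n}$ does not vanish automatically. One must check that $B/(\gamma-1)B$ stays bounded, using that $B$ is cofinitely generated with $B_{\mathrm{div}}$ having trivial $\gamma$-coinvariants, since divisible modules have divisible coinvariants that must also be killed by the finiteness of $H^2$-type terms. If this fails, I would fall back on the Tate local duality and Euler characteristic argument, which bounds $H^1(\widetilde K_{n,v_n},A)$ for $v\nmid p$ directly by $|H^0(K_v,A^*)|$-type quantities independent of $n$.
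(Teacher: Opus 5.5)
There is a genuine gap, at the places $v\nmid p$ that you flag yourself. It comes from reading Definition~\ref{defn:strict} literally, with local condition ``trivial class'' at every $v\nmid p$. The paper's proof never looks at $v\nmid p$. It uses the rows $0\to\Sel^{\str}(A/\widetilde K_n)\to\Sel^{\Gr}(A/\widetilde K_n)\to\prod_{v\mid p}H^1(D_{n,v_n}/I_{n,v_n},(A^-)^{I_{n,v_n}})$ and their analogue over $\widetilde K_\infty$. These rows are exact only if the strict and Greenberg conditions agree away from $p$, so the paper is in effect using the standard strict Selmer group: unramified away from $p$, and trivial image in $H^1(F_w,A^-)$ at $w\mid p$.

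With that setup, $\coker f_n$ is bounded by Lemma~\ref{thm:Fukudanbd}. Only the $p$-adic kernels $\ker h^*_n$ need bounding, via finite inertia invariants of $A^-$, essentially as in your $v\mid p$ paragraph. The snake lemma then gives $|\coker f^*_n|\le|\ker h^*_n|\cdot|\coker f_n|$. Under that reading your direct approach also goes through: for $v\nmid p$ one has $I_{n,v_n}=\widetilde I_{\infty,\widetilde w}$ and $H^1/H^1_{\unram}\hookrightarrow H^1(I,A)$, so those local kernels vanish exactly as in Proposition~\ref{thm:controlSel}. The paper's route just spares you from redoing the Greenberg control.

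One smaller point at ramified $v\mid p$: you must also allow $H^0(\widetilde I_{\infty,\widetilde w},A^-)=A^-$, which the proof of Lemma~\ref{thm:Fukudanbd} treats separately. In that case the kernel vanishes because $\gamma-1$ is invertible on $V^-$.

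With the literal reading, your argument does not close, for three reasons.

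First, you cannot discard $v\notin S$. The local kernel there is $H^1(\widetilde\Gamma_{n,v_n},B_v)$ with $B_v=A^{G_{\widetilde K_{\infty,\widetilde w}}}$. It consists of unramified classes, which are exactly what the literal condition kills. Since there are infinitely many such $v$, you would need these kernels to vanish for almost all of them.

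Second, for $v\in S$ with $v\nmid p$, the bound $|B/(\gamma-1)B|\le|B/B_{\mathrm{div}}|$ needs $(\gamma-1)B_{\mathrm{div}}=B_{\mathrm{div}}$. Since $B_{\mathrm{div}}$ is the image of $V^{G_{\widetilde K_{\infty,\widetilde w}}}$, this is equivalent to $V^{G_{\widetilde K_{n,v_n}}}=0$. The same condition is what would kill the kernels at $v\notin S$. Nothing in the hypotheses provides it; they concern only $p$-adic inertia and $H^0(G_K,A)$. In Greenberg's elliptic-curve argument this input is the finiteness of $E(K_{n,v})[p^\infty]$, which comes from weights. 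Your appeal to ``finiteness of $H^2$-type terms'' gives nothing, since $H^2$ of $\widetilde\Gamma_{n,v_n}\cong\Z_p$ with $p$-primary coefficients vanishes identically.

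Third, the fallback is false. Take $A=E[p^\infty]$ and $v\nmid p$ not split completely in $\widetilde K_\infty$. Tate duality gives $|H^1(\widetilde K_{n,v_n},A)|=|E(\widetilde K_{n,v_n})[p^\infty]|$. This is unbounded along the unramified $\Z_p$-tower whenever $E(\widetilde K_{\infty,\widetilde w})[p^\infty]$ is infinite.

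So either use the standard strict condition away from $p$, as the paper's proof implicitly does, or add a hypothesis such as $V^{G_{\widetilde K_{n,v_n}}}=0$ for all $v\nmid p$ and all $n$.
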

\begin{proof}
   Consider the following diagram,
    \[\begin{tikzcd}
	0 & {\Sel^{\mathrm{str}}(A/\widetilde{K}_{\infty})^{\widetilde\Gamma_n}} & {\Sel^{\Gr}(A/\widetilde{K}_{\infty})^{\widetilde\Gamma_n}} & {\underset{w \mid p}\prod H^1(D_{\infty,w}/I_{\infty,w},({A^-})^{I_{\infty,w}})^{\widetilde\Gamma_{n,v_n}}} \\
	0 & {\Sel^{\mathrm{str}}(A/\widetilde{K}_{n})} & {\Sel^{\Gr}(A/\widetilde{K}_{n})} & {\underset{v \mid p}\prod H^1(D_{n,v_n}/I_{n,v_n},({A^-})^{I_{n,v_n}})}
	\arrow[from=1-1, to=1-2]
	\arrow[from=1-2, to=1-3]
	\arrow[from=1-3, to=1-4]
	\arrow[from=2-1, to=2-2]
	\arrow["{f^*_n}"', from=2-2, to=1-2]
	\arrow[from=2-2, to=2-3]
	\arrow["{f_n}"', from=2-3, to=1-3]
	\arrow[from=2-3, to=2-4]
	\arrow["{h^*_n}"', from=2-4, to=1-4]
\end{tikzcd}\]

By Proposition~\ref{thm:controlSel}, the kernel of the restriction map $f_n$ is finite for every $n$. Hence, the kernel of $f_n^*$ is also finite for all $n$. 
Let $v_n$ be a prime of $\widetilde{K}_n$ that ramifies in $\widetilde{K}_\infty$. Then necessarily $v_n\mid p$. The kernel of the rightmost vertical map is isomorphic to
$H^1\left(\widetilde\Gamma_{n,v_n},\bigl((A^-)^{I_{n,v_n}}\bigr)^{D_{\infty,w}/I_{\infty,w}}\right),$
where $w$ is a prime above $v_n$. Since, by assumption, $H^0(I_{\infty,w},A^-)$ is finite, it follows that $\ker(h_n^*)$ is finite and bounded by the order of $H^0(I_{\infty,w},A^-)$. 
The remainder of the proof now follows by arguments analogous to those used in Lemma~\ref{thm:Fukudanbd}.

%Now if $(D_{\infty,v}/I_{\infty,v})$ acts non trivially on $(A^-)^{I_{\infty,w}}$,    since by assumption $H^0(I_{\infty,w},A^-)$ is finite so $H^0(D_{\infty,v}/I_{\infty,v}, (A^-)^{I_{\infty,w}})$ is finite, and consider $B=H^0(D_{\infty,w}/I_{\infty,w}, (A^-)^{I_{\infty,w}})$ then we have $H^1(\Gamma_{n,v},B) \cong \frac{B}{(\gamma-1)B}$, where $\gamma$ is the topological generator of $\Gamma_{n,v}$. Which shows that $\ker(h_n^*)$ is bounded by $B$ and hence finite.\\
%Now, if $D_{\infty,w}/I_{\infty,w}$ acts trivially on $A^-$ and since by assumption the cyclotomic character acts via non-negative powers on $A^-$. Hence, the action of $\Gamma_{n,v}$ cannot be trivial on $A^-$. Now consider a finite extension $K_n$ of $K$ such that every prime $v_n$ of $K_n$ ramified in $K_{\infty}$  is totally ramified. Then, using a similar proof as in Proposition \ref{thm:controlSel}, we get our desired result.
\end{proof}
\begin{theorem}\label{prop:SelFukuda}
    Let $K$ be a number field and $K_\infty$ be a $\Z_p$-extension of $K$. We keep the setting and hypotheses of Lemma \ref{thm:Fukudanbd}. Let $* \in \{\Gr, \str\}$.
    Suppose that $\Sel^{*}(A/K_\infty)^\vee$ is a torsion $\Lambda$-module. Then there exist a neighbourhood $U=\cU(K_\infty,r)$, $r\geq 0$ of $K_\infty$ such that
     \begin{enumerate}[(a)]
         \item $\Sel^{*}(A/\widetilde{K}_\infty)^\vee$ is a torsion $\Lambda$-module for each $\widetilde{K}_\infty\in U$,
         \item $\mu(\Sel^{*}(A/\widetilde{K}_\infty)^\vee)\leq \mu(\Sel^{*}(A/K_\infty)^\vee)$ for each $\widetilde{K}_\infty\in U$,
         \item $\lambda(\Sel^{*}(A/\widetilde{K}_\infty)^\vee)\leq \lambda(\Sel^{*}(A/K_\infty)^\vee)$ for each $\widetilde{K}_\infty\in U$ whenever $\mu(\Sel^{*}(A/\widetilde{K}_\infty)^\vee)=\mu(\Sel^{*}(A/K_\infty)^\vee)$.
     \end{enumerate}
     
\end{theorem}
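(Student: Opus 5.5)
The plan is to reduce to Kleine's abstract result on Fukuda modules, \cite[Theorem 4.11]{KleineCanad}, using Lemma~\ref{thm:Fukudanbd} and Proposition~\ref{control strsel} to provide uniform Fukuda parameters on a neighbourhood of $K_\infty$. Fix $*\in\{\Gr,\str\}$. Put $X^{(L_\infty)}:=\Sel^{*}(A/L_\infty)^\vee$ and $X_n^{(L_\infty)}:=\Sel^{*}(A/L_n)^\vee$ for $L_\infty\in\calE(K)$.

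\textbf{Step 1: uniform Fukuda parameters.} For $*=\Gr$, Lemma~\ref{thm:Fukudanbd} gives $r_0\ge1$ and $p$-powers $C_1,C_2$ such that $X^{(\widetilde K_\infty)}$ is a Fukuda $\Lambda$-module with parameters $(C_1,C_2,1)$ for every $\widetilde K_\infty\in\calU(K_\infty,r_0)$. This holds in the weakened sense, i.e.\ for $n\ge m$, which the Remark after Definition~\ref{defn:Fukudamodules} allows.

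For $*=\str$, Proposition~\ref{control strsel} bounds $\ker$ and $\coker$ of $f_n^*$ uniformly in $n\ge m$ and in $\widetilde K_\infty\in\calU(K_\infty,r_0)$. The bound on $\ker(h_n^*)$ depends only on $|H^0(I_{\infty,w},A^-)|$ for primes $w$ of $K_\infty$, so it is uniform. The argument of \cite[Corollary 4.2]{KleineCanad} then produces $Z_n$ with uniform parameters $(C_1',C_2',1)$:
\begin{itemize}
\item $\coker(\pr_n)$ is dual to $\ker(f_n^*)$;
\item $Y_n$ is controlled by $\coker(f_n^*)$;
\item $Z_n:=\omega_n$-multiples, where $\omega_n\in\mathfrak m^{n}$, so $Z_{n+1}\subset\mathfrak m Z_n$.
\end{itemize}

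\textbf{Step 2: compare finite layers.} The key observation is that for $\widetilde K_\infty\in\calU(K_\infty,r)$ with $r\ge n$ we have $\widetilde K_n=K_n$. Hence $X_n^{(\widetilde K_\infty)}=X_n^{(K_\infty)}$ for all $n\le r$. Kleine's theorem says that for a Fukuda module with parameters $(C_1,C_2,C_3)$, the invariants are determined by $|X_n|$ for finitely many $n$. Concretely, there is $n_0$, depending only on the parameters and on the invariants of $X^{(K_\infty)}$, such that:
\begin{itemize}
\item if $X_n$ has order $\le p^{\mu p^n+\lambda n+\nu}$ at layers $n\le n_0$ with the growth pattern of a torsion module, then $X$ is torsion;
\item $\mu(X)$ is bounded by the corresponding exponent at these layers;
\item when the $\mu$ values agree, the same holds for $\lambda$.
\end{itemize}
The reason is that $\operatorname{rank}_{\Lambda}$, $\mu$ and $\lambda$ of a Fukuda module are read off, up to the uniform error $C_1C_2C_3$, from the growth $v_p|X_n|$ on finitely many consecutive layers; one uses Nakayama via $Z_{n+1}\subset\mathfrak m Z_n$. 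Choosing $r\ge\max(r_0,n_0+1)$ and $U=\calU(K_\infty,r)$, the layers of $\widetilde K_\infty$ and $K_\infty$ agree up to $n_0$, and (a)–(c) follow.

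\textbf{Main obstacle.} The main obstacle is Step 2 in the form needed. Kleine's Theorem 4.11 must be applied with the weakened Fukuda condition valid only for $n\ge m$. Also, $n_0$ must depend only on $(C_1,C_2,1)$, $m$ and the invariants of $X^{(K_\infty)}$, not on $\widetilde K_\infty$. I would first check that Kleine's proof uses the conditions only for large $n$; the preceding Remark asserts this. Then I would take $n_0$ explicit, namely $m$ plus $\log_p$ of the product of the parameters plus the $\mu,\lambda$-dependent bound from \cite{KleineCanad}.

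A secondary point for $*=\str$ is the case where $H^0(\widetilde I_{\infty,\widetilde w},A^-)$ could be larger than for $K_\infty$. I would handle it with the same Nakayama argument as in Lemma~\ref{thm:Fukudanbd}, which requires enlarging $r$ by $v_p|H^0(I_{\infty,w},A^-)|+1$.
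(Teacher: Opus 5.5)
Your proposal is correct and follows the same route as the paper. You get uniform Fukuda parameters on $\calU(K_\infty,r)$ from Lemma~\ref{thm:Fukudanbd} (for $*=\Gr$) and Proposition~\ref{control strsel} (for $*=\str$), valid for $n\ge m$ as the remark after Definition~\ref{defn:Fukudamodules} permits, and then apply \cite[Theorem~4.11]{KleineCanad}, whose engine is exactly the agreement $\widetilde K_n=K_n$ for $n\le r$ that you isolate in Step~2. One small fix in Step~1: $Z_{n+1}=\omega_{n+1}X\subset\mathfrak m Z_n$ holds because $\omega_{n+1}=\nu_{n+1,n}\,\omega_n$ with $\nu_{n+1,n}\in\mathfrak m$, not merely because $\omega_n\in\mathfrak m^{n}$.
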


\begin{proof}
By Lemma~\ref{thm:Fukudanbd} and Proposition~\ref{control strsel}, the relevant inverse systems are Fukuda $\Lambda$-modules with parameters bounded uniformly on a sufficiently small neighbourhood. Applying \cite[Theorem~4.11]{KleineCanad} gives cotorsionness and the asserted bounds for $\mu$ and $\lambda$.
\end{proof}
\subsection{Iwasawa invariants of fine Selmer groups}
Recall that $V$ is a $p$-ordinary $2$-dimensional Galois representation which is unramified outside a finite set of primes of $K$ as introduced in Section \ref{preli}.  Let $S$ denote a finite set of primes of $K$ containing the primes above $p$, the archimedean primes and the primes where the representation  $V$ is ramified. And let $S_{\mathrm{ram}}$ denote the subset of $S$ consisting of primes at which $V$ is ramified.

\begin{definition}
For a $\mathbb Z_p$-extension $L_\infty/K$ and $n\in\N$, define
\begin{small}
\[
\begin{aligned}
\mathcal W(S,L_\infty,n)
:=\bigl\{M_\infty\in\calU(L_\infty,n)\mid{}&
 \forall v\in S, \text{if }
v\text{ splits completely in }L_\infty 
 \text{, then it splits completely in }M_\infty\bigr\}.
\end{aligned}
\]
\end{small}
These sets generate a topology on $\calE(K)$ finer than Greenberg's topology.
\end{definition}
This is a finer topology. A similar kind of topology was introduced in \cite[pp. 1402]{KleineCanad}, where the condition on splitting was placed on the bad primes of the Abelian variety. We will now show that the dual fine Selmer group $R_S{(A/\widetilde{K}_{\infty})}^\vee$ is a Fukuda $\Lambda$-Module. 
\begin{proposition}\label{prop:Fine SelFukuda_0}
    Let $K$ be a number field and $K_{\infty}/K$ be a $\Z_p$-extension. Moreover we assume that $H^0(K_{v},A)=0$ for each $v\in S_{\mathrm{ram}}$. There exist $ r \in \N$ such that $R_S{(A/\widetilde{K}_{\infty})}^\vee$ is a Fukuda $\Lambda$-module with parameters $(1,1,1)$ for each $\widetilde{K}_{\infty} \in \mathcal{W}(S, K_{\infty},r)$. 
\end{proposition}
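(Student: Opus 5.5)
Following the strategy of \cite[Theorem 4.5]{KleineCanad} and Lemma~\ref{thm:Fukudanbd}, I would reduce the Fukuda property with parameters $(1,1,1)$ to a perfect control theorem. Concretely, I would show that for suitable $r$ (and, if needed, all $n\ge e$ for some $e$, as allowed by the weakened definition) and every $\widetilde K_\infty\in\mathcal W(S,K_\infty,r)$, the restriction map
\[
R_S(A/\widetilde K_n)\longrightarrow R_S(A/\widetilde K_\infty)^{\widetilde\Gamma_n}
\]
is an isomorphism. Then $\pr_n$ on duals is surjective, so $C_1=1$. Taking $Z_n:=Y_n=\ker(\pr_n)=\omega_n X$ with $\omega_n=(1+T)^{p^n}-1$ gives $C_2=C_3=1$, and also $Z_{n+1}=\tfrac{\omega_{n+1}}{\omega_n}Z_n\subset\mathfrak m Z_n$.

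To prove the control statement I would use the fundamental diagram with rows $0\to R_S\to H^1(G_S(\cdot),A)\to\bigoplus_{v\in S}P^1_v$ over $\widetilde K_n$ and $\widetilde K_\infty$. The middle map has kernel $H^1(\widetilde\Gamma_n,H^0(G_S(\widetilde K_\infty),A))$ and cokernel inside $H^2(\widetilde\Gamma_n,\cdot)=0$. To kill the kernel I need $H^0(\widetilde K_\infty,A)=0$. Since $S_{\mathrm{ram}}$ need not be nonempty, I would instead use that $\widetilde K_\infty/K$ is pro-$p$: then $H^0(G_K,A)=0$ forces $H^0(\widetilde K_\infty,A)=0$ by Nakayama, as in Proposition~\ref{thm:controlSel}. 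This requires assuming or deducing $H^0(G_K,A)=0$, perhaps from $H^0(K_v,A)=0$ for some $v\in S_{\mathrm{ram}}$ when that set is nonempty. For the local maps $h_{n,v}$ I would show injectivity for $n\ge e$, prime by prime. The kernel at $w\mid v_n$ is $H^1(\widetilde\Gamma_{n,v_n},H^0(\widetilde K_{\infty,w},A))$, which is trivial if $v$ splits completely (the decomposition group is trivial). Otherwise the decomposition group is a nontrivial $\Z_p$ and we need $H^0(\widetilde K_{\infty,w},A)=0$.

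For $v\in S_{\mathrm{ram}}$ this follows from $H^0(K_v,A)=0$, again by Nakayama, since $\widetilde K_{\infty,w}/K_v$ is pro-$p$. For $v\in S\setminus S_{\mathrm{ram}}$ (including $v\mid p$ and archimedean places, which are harmless as $p$ is odd), the splitting condition in $\mathcal W$ ensures that if $v$ splits completely in $K_\infty$ it does so in $\widetilde K_\infty$. If $v$ does not split completely in $K_\infty$, choosing $r$ large (greater than the level at which its decomposition field stabilises) forces the decomposition field of $v$ in $\widetilde K_\infty$ to be contained in $K_r$. Thus the number of primes above $v$ and the local degrees are controlled uniformly, and one takes $e$ large enough that all such $v_n$ have full local $\Z_p$ decomposition groups.

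The main obstacle is the primes $v\in S\setminus S_{\mathrm{ram}}$ with $v\nmid p$ that are not split. Here $A$ is unramified at $v$, so $H^0(\widetilde K_{\infty,w},A)$ can be nonzero, and its $H^1$ is $B/(\gamma-1)B$, which is not obviously trivial. I would first try to show it vanishes by comparing with $K_{\infty,w}$, using the neighbourhood condition (as in the $r_v$ trick of Lemma~\ref{thm:Fukudanbd}). Failing that, I would weaken the conclusion to bounded parameters, or observe that the dual of the local term contributes only a finite bounded piece that can be absorbed into $C_2$. Given the stated parameters $(1,1,1)$, I expect the intended argument to rely on these local kernels vanishing under the hypotheses, possibly after enlarging $S$-hypotheses implicitly.
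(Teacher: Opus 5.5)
Your skeleton is the paper's. In the diagram, $g_n$ is an isomorphism: cohomological dimension one handles the cokernel, and $H^0(\widetilde K_\infty,A)=0$ by Nakayama handles the kernel. The local kernels vanish at places split completely (preserved by the $\mathcal W$-condition) and at $v\in S_{\mathrm{ram}}$ (Nakayama from $H^0(K_v,A)=0$). The snake lemma then gives an isomorphism, so $Y_n=Z_n=\omega_nX$. The paper treats only these two kinds of places before invoking the snake lemma, so the case you leave open is one it passes over in silence.

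Still, your proposal stops short of a proof. For finite $v\in S\setminus S_{\mathrm{ram}}$ with $v\nmid p$ and $v$ not split completely, you neither kill nor bound $\ker h_{n,v_n}$, and you consider weakening the conclusion. The missing idea is short:
\begin{itemize}
\item Since $v\nmid p$, the extension $\widetilde K_{\infty,w}/\widetilde K_{n,v_n}$ is unramified. So $\ker h_{n,v_n}=H^1(\widetilde\Gamma_{n,v_n},H^0(\widetilde K_{\infty,w},A))$ consists of inflated classes.
\item It therefore embeds into $H^1_{\unram}(\widetilde K_{n,v_n},A)=A/(\phi_n-1)A$, where $\phi_n$ is the Frobenius of $\widetilde K_{n,v_n}$ and $A^{I_v}=A$ because $v\notin S_{\mathrm{ram}}$.
\item A quotient of the divisible group $A$ is divisible, and here its corank equals $\dim_{\calK}V^{\phi_n=1}$. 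Hence $\ker h_{n,v_n}=0$ as soon as no eigenvalue of $\mathrm{Frob}_v$ on $V$ is a root of unity of $p$-power order.
\end{itemize}
That eigenvalue condition holds by the Weil bounds whenever $V$ comes from a modular form of weight $\ge 2$, the same kind of input as in Remark~\ref{rem:3.12}. So your $B/(\gamma-1)B$ is in fact zero, and no retreat to bounded parameters is needed.

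Without some such condition the local kernel really can be nonzero: if $\mathrm{Frob}_v$ acts trivially on $V$, it is all of $A$. So the proposition tacitly requires either the eigenvalue condition at the unramified places of $S$, or $S=S_p\cup S_\infty\cup S_{\mathrm{ram}}$. In the latter case nothing remains to check. An ordinary $V$ with a nonzero Hodge--Tate weight at $v\mid p$ is ramified there, since $I_v$ acts on a graded piece through a nontrivial power of $\omega_p$. Hence every finite place of $S$ lies in $S_{\mathrm{ram}}$.

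This also corrects two points in your write-up:
\begin{itemize}
\item Places $v\mid p$ belong to $S_{\mathrm{ram}}$, not $S\setminus S_{\mathrm{ram}}$. They are disposed of by $H^0(K_v,A)=0$ plus Nakayama. Your decomposition-field paragraph does not handle them: it only bounds the number of places above $v$ and kills no kernel.
\item $S_{\mathrm{ram}}\neq\emptyset$, so $H^0(G_K,A)\subseteq H^0(K_v,A)=0$. This is exactly how the paper gets $\ker g_n=0$, and it removes your worry about needing a separate hypothesis there.
\end{itemize}
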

\begin{proof}
 We will show that  for every $\widetilde{K}_\infty \in \mathcal{W}(S, K_{\infty},r)$, the kernel and cokernel of the map $R_S(A/\widetilde{K}_n)\overset{f_n}\lra R_S(A/\widetilde{K}_\infty)^{\Gamma_n}$ are finite and bounded uniformly for every $n>0$.
Consider the following commutative diagram:
   \[\begin{tikzcd}
	0 & {R_S(A/\widetilde{K}_{\infty})^{\Gamma_n}} & {H^1(G_S(\widetilde{K}_{\infty}),A)^{\Gamma_n}} & {\underset{v\in S}\prod K^1_v(A/\widetilde{K}_{\infty,v_n})^{\Gamma_{n,v_n}}} \\
	0 & {R_S(A/ \widetilde{K}_n)} & {H^1(G_S(\widetilde{K}_n),A)} & {\underset{v\in S}\prod K^1_v(A/\widetilde{K}_{n,v_n})}
	\arrow[from=1-1, to=1-2]
	\arrow[from=1-2, to=1-3]
	\arrow[from=1-3, to=1-4]
	\arrow[from=2-1, to=2-2]
	\arrow["{f_n}"', from=2-2, to=1-2]
	\arrow[from=2-2, to=2-3]
	\arrow["{g_n}"', from=2-3, to=1-3]
	\arrow[from=2-3, to=2-4]
	\arrow["{h_n= \underset{v}\prod h_{n,v_n}}"', from=2-4, to=1-4]
\end{tikzcd}\]
Since $\Gamma_n$ has the $p$-cohomological dimension $1$, we have $\coker(g_n)=0$ for all $n$. Note that the triviality of $H^0(K_v, A)$ implies that $H^0(G_S(K), A)=0$. Moreover, using the Nakayama lemma, we also get  $H^0(G_S(\widetilde{K}_\infty), A)=0$ for all $\widetilde{K}_\infty\in \mathcal{E}(K)$. Therefore, $\ker g_n=0$ and hence the map $g_n$ is an isomorphism. 

If $v_n$ is totally split in $K_\infty$, then for $\widetilde{K}_\infty\in \mathcal{W}(S, K_\infty, r)$, $v$ also totally splits in $\widetilde{K}_\infty$, we get $\ker(h_{n,v_n})=0$. 
For $v\in S_{\mathrm{ram}}$, note that $H^0(G_{K_v},A)=0$. It follows from the Nakayama lemma that $H^0(G_{K_{\infty,w}},A)=0$ where $w \mid v_n \mid S_{\mathrm{ram}}$ is a prime of $K_{\infty}$. 
%Otherwise, for $v\in S$, by Remark \ref{remark:voutsidepN} and the proof of Proposition \ref{thm:controlSel}, we get that $\ker(h_{n,v})=0$. 
The proposition now follows from the snake lemma.
   \end{proof}    

\begin{theorem}{\label{Thm fine sel_1}}
Suppose $K$ is a number field and $K_{\infty}/K$ is a $\Z_p$-extension. Moreover, we assume that $H^0(K_{v},A)=0$ for each $v\in S_{\mathrm{ram}}$. Let $\Sigma$ be a finite set containing the set $S$.
If $R_S(A/K_{\infty})^{\vee}$ is a $\Lambda$-torsion module, then there exists a neighbourhood $U= \mathcal{W}(S, K_{\infty},m)$ of $K_{\infty}$ such that
\begin{enumerate}[(a)]
         \item $R_\Sigma(A/\widetilde{K}_\infty)^{\vee}$ is a torsion $\Lambda$-module for each $\widetilde{K}_\infty\in U$,
         \item $\mu(R_\Sigma(A/\widetilde{K}_\infty)^{\vee})\leq \mu(R_S(A/K_\infty)^{\vee})$ for each $\widetilde{K}_\infty\in U$,
         \item $\lambda(R_\Sigma(A/\widetilde{K}_\infty)^{\vee})\leq \lambda(R_S(A/K_\infty)^{\vee})$ for each $\widetilde{K}_\infty\in U$ such that $\mu(R_S(A/\widetilde{K}_\infty)^{\vee})=\mu(R_S(A/K_\infty)^{\vee})$.
     \end{enumerate}
     \end{theorem}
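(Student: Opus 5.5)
The plan has two steps: first pass from $S$ to the larger set $\Sigma$, then invoke Kleine's Fukuda machinery exactly as in Theorem~\ref{prop:SelFukuda}. By Proposition~\ref{prop:Fine SelFukuda_0} there is $r$ such that $R_S(A/\widetilde K_\infty)^\vee$ is a Fukuda $\Lambda$-module with parameters $(1,1,1)$ for every $\widetilde K_\infty\in\mathcal W(S,K_\infty,r)$. Since $K_\infty$ itself lies in this neighbourhood and $R_S(A/K_\infty)^\vee$ is torsion by hypothesis, \cite[Theorem~4.11]{KleineCanad} applies. (By the remark following Definition~\ref{defn:Fukudamodules}, its proof goes through when the Fukuda conditions hold only for $n\ge e$.) After possibly enlarging $r$ to some $m$, it shows that for all $\widetilde K_\infty\in U=\mathcal W(S,K_\infty,m)$ the module $R_S(A/\widetilde K_\infty)^\vee$ is $\Lambda$-torsion, that $\mu(R_S(A/\widetilde K_\infty)^\vee)\le\mu(R_S(A/K_\infty)^\vee)$, and that the analogous $\lambda$-inequality holds whenever the $\mu$-invariants coincide. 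This is the statement for $\Sigma=S$.

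To handle a general finite $\Sigma\supseteq S$, I would compare $R_\Sigma$ and $R_S$ over $\widetilde K_\infty$. There is a natural inclusion $R_S(A/\widetilde K_\infty)\hookrightarrow R_\Sigma(A/\widetilde K_\infty)$ via inflation $H^1(G_S(\widetilde K_\infty),A)\to H^1(G_\Sigma(\widetilde K_\infty),A)$. I would show it is in fact an equality, or at least has cokernel with trivial $\mu$ and $\lambda$. An element of $R_\Sigma$ is trivial at all primes of $\Sigma$, in particular at $v\in\Sigma\setminus S$. The inflation--restriction sequence for $G_\Sigma\to G_S$ identifies the obstruction to descending to $G_S$ with the restriction to the inertia groups at primes $w\mid v$, $v\in\Sigma\setminus S$. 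Since such a class is locally trivial at $w$, it is in particular unramified there, so it comes from $H^1(G_S(\widetilde K_\infty),A)$. Here one uses that $A$ is unramified at $v\notin S$ and the standard exact sequence
\[
0\to H^1(G_S,A)\to H^1(G_\Sigma,A)\to \bigoplus_{w\mid v,\ v\in\Sigma\setminus S} H^1(I_w,A)^{G_w}.
\]
Its image lands in $R_S$ because the local conditions at primes of $S$ are unchanged. Hence $R_\Sigma(A/\widetilde K_\infty)=R_S(A/\widetilde K_\infty)$, and both (a) and (b) for $\Sigma$ follow from the $\Sigma=S$ case. Statement (c) follows the same way, noting that its hypothesis is phrased in terms of $\mu(R_S(A/\widetilde K_\infty)^\vee)$, which is exactly what Kleine's theorem needs.

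I expect the main obstacle to be the $\Sigma$ versus $S$ comparison, specifically checking carefully that local triviality at primes of $\Sigma\setminus S$ forces the class to be unramified over the infinite extension $\widetilde K_\infty$. If equality failed for some reason, I would fall back on showing that the quotient $R_\Sigma/R_S$ injects into a product of modules of the form $\bigoplus_{w\mid v}H^1(\widetilde K_{\infty,w},A)$ for $v\in\Sigma\setminus S$, and then check directly that they are cofinitely generated over $\mathbb Z_p$ with controlled corank. That fallback would give only the inequality $R_\Sigma\supseteq R_S$ up to a small error, which would need extra care for the $\mu,\lambda$ bounds. The primary route, however, is the equality above.
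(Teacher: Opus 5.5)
Your first step, using Proposition~\ref{prop:Fine SelFukuda_0} and \cite[Theorem~4.11]{KleineCanad} to handle $R_S$ over the neighbourhood $\mathcal W(S,K_\infty,m)$, is what the paper does. The problem is the comparison of $R_\Sigma$ with $R_S$. Your claimed ``natural inclusion'' $R_S(A/\widetilde K_\infty)\hookrightarrow R_\Sigma(A/\widetilde K_\infty)$ goes the wrong way.

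Inflation does send $H^1(G_S(\widetilde K_\infty),A)$ into $H^1(G_\Sigma(\widetilde K_\infty),A)$. However, $R_\Sigma$ imposes the \emph{extra} condition of local triviality at primes $w\mid v$ with $v\in\Sigma\setminus S$, and an inflated class is only known to be unramified there. Unramified and trivial coincide when $v$ is finitely decomposed in $\widetilde K_\infty$, because then $\mathrm{Gal}(\widetilde K_{\infty,w}^{\mathrm{ur}}/\widetilde K_{\infty,w})$ has pro-order prime to $p$. But if $v$ splits completely in $\widetilde K_\infty$, then $\widetilde K_{\infty,w}=K_v$ and
\[
H^1_{\mathrm{ur}}(K_v,A)\cong A/(\mathrm{Frob}_v-1)A,
\]
which is nonzero as soon as $\mathrm{Frob}_v$ has eigenvalue $1$ on $V$. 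Nothing in the hypotheses excludes this: the vanishing of $H^0(K_v,A)$ is assumed only for $v\in S_{\mathrm{ram}}$. So the equality $R_\Sigma=R_S$ is not established, and deducing (a)--(c) for $\Sigma$ ``from the $\Sigma=S$ case'' via that equality does not stand as written. Your fallback is also aimed at the wrong quotient: there is no $R_\Sigma/R_S$ to control.

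Your inflation--restriction argument \emph{does} correctly prove the inclusion $R_\Sigma(A/\widetilde K_\infty)\subseteq R_S(A/\widetilde K_\infty)$. That is all the paper uses, and it suffices. Dualising gives a surjection $R_S(A/\widetilde K_\infty)^\vee\twoheadrightarrow R_\Sigma(A/\widetilde K_\infty)^\vee$. A quotient of a finitely generated torsion $\Lambda$-module is torsion, which gives (a). Moreover, $\mu$ and $\lambda$ are additive in short exact sequences of finitely generated torsion $\Lambda$-modules, so
\[
\mu\bigl(R_\Sigma(A/\widetilde K_\infty)^\vee\bigr)\le\mu\bigl(R_S(A/\widetilde K_\infty)^\vee\bigr)\le\mu\bigl(R_S(A/K_\infty)^\vee\bigr),
\]
and likewise for $\lambda$ once $\mu(R_S(A/\widetilde K_\infty)^\vee)=\mu(R_S(A/K_\infty)^\vee)$. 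This is exactly why the hypothesis in (c) is phrased in terms of $R_S$ rather than $R_\Sigma$. To fix your proof, drop the reverse inclusion and the equality claim, and replace them with this monotonicity argument.
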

     \begin{proof}
For $S\subset\Sigma$ there is an inclusion $R_\Sigma(A/\widetilde K_\infty)\subset R_S(A/\widetilde K_\infty)$.  The proof now follows by arguments similar to Theorem~\ref{prop:SelFukuda}.
\end{proof}
\subsection{Greenberg fine Selmer groups}
In this section, we study Iwasawa invariants for Greenberg fine Selmer. In the first few remarks, we give the comparison between Greenberg fine Selmer groups and the usual fine Selmer groups.
\begin{remark}\label{rem:3.12}
Assume that $V$ arises from a Galois representation attached to a cuspidal modular form of weight $k\ge 2$. Then, by \cite[Proposition 3.1]{kidwell}, for $v\in S_{\mathrm{ram}}$,  one has 
$ 
\ker \left(H^1(F_v, A) \rightarrow H^1(I_v, A)\right) = 0
$.   
The proof of \cite[Proposition 3.1]{kidwell} is based on the fact that the Frobenius element $\mathrm{Frob}_v$ does not have $1$ as an eigenvalue in $V$. 
Under this hypothesis, it follows that for all $v \notin S_{\mathrm{ram}}$, the local conditions defining $R_S(A/F)$ and $\Sel^{\mathrm{Gr}}(A/F)$ coincide. In particular, $R_S(A/F)$ is independent of the choice of $S$. Furthermore, in this setting, one has
$R^{\mathrm{Gr}}(A/K_\infty) \cong R_S(A/K_\infty)$, and hence $R_S(A/K_\infty)$ is independent of $S$. 
\end{remark}
\begin{remark}\label{rem:3.13}
 If $w$ is not split infinitely in $K$, then the quotient $G_{K_w}/I_w$ has profinite degree prime to $p$. In particular, the restriction map
$ H^1(K_w, A) \to H^1(I_w, A) $
is injective. In particular, if $K_\infty$ is a $\Z_p$-extension, where all primes of $K$ are finitely split then $R^{\Gr}(A/K_\infty)\cong R_S(A/K_\infty)$. Therefore, here also $R_S(A/K_\infty)$ becomes independent of $S$.
\end{remark}

\begin{theorem} {\label{Gr fine selfukuda}}
     Let $K$ be a number field and $K_{\infty}/K$ be a $\Z_p$-extension. Moreover, we assume that $H^0(G_{K_{v}},A)=0$ for each $v \mid p$. If $R^{\Gr}(A/K_{\infty})^{\vee}$ is $\Lambda$-torsion, then there exists a neighbourhood $U= \mathcal{W}(S, K_{\infty},m)$ of $K_{\infty}$ such that
\begin{enumerate}[(a)]
         \item $R^{\Gr}(A/\widetilde{K}_\infty)^{\vee}$ is a torsion $\Lambda$-module for each $\widetilde{K}_\infty\in U$,
         \item $\mu(R^{\Gr}(A/\widetilde{K}_\infty)^{\vee})\leq \mu(R^{\Gr}(A/K_\infty)^{\vee})$ for each $\widetilde{K}_\infty\in U$,
         \item $\lambda(R^{\Gr}(A/\widetilde{K}_\infty)^{\vee})\leq \lambda(R^{\Gr}(A/K_\infty)^{\vee})$ for each $\widetilde{K}_\infty\in U$ such that $\mu(R^{\Gr}(A/\widetilde{K}_\infty)^{\vee})=\mu(R^{\Gr}(A/K_\infty)^{\vee})$.
     \end{enumerate}
\end{theorem}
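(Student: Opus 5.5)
The plan follows the template of Theorem~\ref{prop:SelFukuda} and Theorem~\ref{Thm fine sel_1}. We show that $R^{\Gr}(A/\widetilde{K}_\infty)^\vee$ is a Fukuda $\Lambda$-module with parameters bounded uniformly for all $\widetilde{K}_\infty$ in a suitable $\mathcal{W}(S,K_\infty,m)$. We then apply \cite[Theorem~4.11]{KleineCanad}, in its weakened form where the Fukuda conditions hold only for $n\ge e$. That theorem yields (a), (b) and (c) at once, exactly as in the proof of Theorem~\ref{prop:SelFukuda}. So everything reduces to a uniform control theorem for the restriction maps
\[
R^{\Gr}(A/\widetilde{K}_n)\overset{f_n}{\lra} R^{\Gr}(A/\widetilde{K}_\infty)^{\widetilde\Gamma_n}.
\]

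To get this, we use the same commutative diagram as in Proposition~\ref{prop:Fine SelFukuda_0}, but with $H^1(\,\cdot\,,A)$ in the middle column and the local terms $K^1_v$ of Definition~\ref{defn:gr fine sel} on the right. The hypothesis $H^0(G_{K_v},A)=0$ for $v\mid p$ implies $H^0(G_K,A)=0$. Since $\Gal(K_\infty/K)$ is pro-$p$, Nakayama's lemma then gives $H^0(G_{\widetilde K_\infty},A)=0$ for every $\widetilde K_\infty\in\calE(K)$. Hence the global map $g_n$ is injective, and it is surjective because $\mathrm{cd}_p\widetilde\Gamma_n=1$. By the snake lemma, $\ker f_n=0$, and $\coker f_n$ is controlled by $\ker h_n=\prod_v\ker h_{n,v_n}$.

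The local analysis splits into two cases.
\begin{enumerate}
\item[(i)] For $v\mid p$ the local term is the full $H^1(\widetilde K_{n,v_n},A)$. So $\ker h_{n,v_n}=H^1(\widetilde\Gamma_{n,v_n},H^0(\widetilde K_{\infty,w},A))$. Applying Nakayama locally, $H^0(G_{K_v},A)=0$ forces $H^0(\widetilde K_{\infty,w},A)=0$, since $\widetilde K_{\infty,w}/K_v$ is pro-$p$. Hence $\ker h_{n,v_n}=0$, uniformly in $n$ and in $\widetilde K_\infty$.
\item[(ii)] For $v\nmid p$, $v$ is unramified in $\widetilde K_\infty$ because $P(\widetilde K_\infty)\subset P(K_\infty)$ consists of primes above $p$. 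The unramified quotient $H^1/H^1_{\unram}$ injects into $H^1(I_w,A)^{G_w/I_w}$, and $I_{n,v_n}=I_{\infty,w}$. So the kernel of $H^1(I_{n,v_n},A)\to H^1(I_{\infty,w},A)$ vanishes, and the kernel on the quotients is therefore zero. If $v$ splits completely in $K_\infty$, the condition defining $\mathcal{W}(S,K_\infty,m)$ makes it split completely in $\widetilde K_\infty$. Then $\widetilde\Gamma_{n,v_n}$ is trivial and $\ker h_{n,v_n}=0$ trivially.
\end{enumerate}

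Combining these, $\coker f_n=0$ for all $n$ beyond a fixed $m$, uniformly on the neighbourhood. This gives Fukuda parameters $(1,1,1)$, via the argument of \cite[Corollary 4.2, Theorem 4.5]{KleineCanad}.

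The main obstacle is step (ii). One has to make sure there is no hidden contribution from the Frobenius cohomology at unramified primes $v\nmid p$ that are only finitely decomposed. Concretely, we need the map
\[
H^1(\widetilde K_{n,v_n},A)/H^1_{\unram}\to H^1(\widetilde K_{\infty,w},A)/H^1_{\unram}
\]
to be injective. We will check this by computing both quotients as $H^1(I,A)^{\mathrm{Frob}}$, which injects under restriction because the inertia groups coincide. If that fails, the fallback is to bound the kernel by $|H^0(\mathrm{Frob},\cdot)|$ in the style of Case~1 of Proposition~\ref{thm:controlSel}. The finitely many primes of $S$ then give a uniform constant, which is still enough for \cite[Theorem~4.11]{KleineCanad}.
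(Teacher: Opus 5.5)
Your proposal is correct and takes the same route as the paper. You first prove perfect control for $R^{\Gr}$: the global map is an isomorphism because $H^0(G_K,A)=0$ together with Nakayama; the local kernels vanish at $v\mid p$ by $H^0(G_{K_v},A)=0$ plus Nakayama, and at $v\nmid p$ because the inertia groups coincide. This makes $R^{\Gr}(A/\widetilde K_\infty)^\vee$ a Fukuda $\Lambda$-module with parameters $(1,1,1)$, and you conclude with \cite[Theorem~4.11]{KleineCanad}. The step you flag as the main obstacle in (ii) is already settled by your own inertia argument (as in the unramified case of Proposition~\ref{thm:controlSel}), so neither the fallback nor the splitting condition in $\mathcal{W}(S,K_\infty,m)$ is needed for the control step.
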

\begin{proof}
By the argument of Proposition~\ref{prop:Fine SelFukuda_0}, there exists $r$ such that $R^{\Gr}(A/\widetilde K_\infty)^\vee$ is a Fukuda $\Lambda$-module with parameters $(1,1,1)$ for every $\widetilde K_\infty\in\mathcal W(S,K_\infty,r)$. The conclusion then follows from \cite[Theorem~4.11]{KleineCanad}, exactly as in the proof of Theorem~\ref{prop:SelFukuda}.
\end{proof}

\begin{remark}
         In \cite{CS05}, the authors showed that the Iwasawa's $\mu =0$ for $K_{cyc}$ is connected to the structure of fine Selmer groups of elliptic curves [\emph{loc.cit}, Theorem 3.4]. Motivated by this observation, they further conjectured that for all elliptic curves $E$ over $K$, $R(E/K_{\mathrm{cyc}})^\vee$ is a finitely generated $\mathbb{Z}_p$-module. They refer to it as Conjecture A in [\emph{loc.cit}].
         
         If an analogue of Conjecture A holds for $K_\infty$, meaning that $R(A/K_\infty)^\vee$ is a finitely generated $\mathbb{Z}_p$-module, then it follows that an analogue of Conjecture A also holds for $R(A/\widetilde{K}_\infty)$ for every $\widetilde{K}_\infty \in U(K_\infty, m)$.
 \end{remark}
\section{Applications}{\label{example}}

We briefly recall the construction of $p$-adic $L$-functions for elliptic curves over arbitrary $\Z_p$-extensions, following \cite{Disegni}. Let $E/K$ be an elliptic curve, and let $p$ be a prime such that $E$ has ordinary (good or multiplicative) reduction at every prime $\mathfrak p \mid p$ of $K$. Fix a $\Z_p$-free quotient $\Gamma$ of the Galois group of the maximal abelian extension of $K$. The conjectural $p$-adic $L$-function $L_p^{(\Gamma)}(E)$ is an element of $\mathcal O_{\mathcal K'}[[\Gamma]]\otimes_{\mathcal O_{\mathcal K'}} \mathcal K',$ where $\mathcal K'/\Q_p$ is a finite extension containing, for every prime $\mathfrak p \mid p$ of good reduction, a root of the polynomial $P_{\mathfrak p}(X)=X^2-a_{\mathfrak p}X+\mathrm N(\mathfrak p), $
with $a_{\mathfrak p}=\mathrm N(\mathfrak p)+1-\#E(k_{\mathfrak p}).$
Here $k_{\mathfrak p}$ denotes the residue field at $\mathfrak p$ and $\mathrm N(\mathfrak p)$ its absolute norm. For each $\mathfrak p \mid p$, let $\alpha_{\mathfrak p}$ be the unique unit root of $P_{\mathfrak p}(X)$ when $E$ has good ordinary reduction at $\mathfrak p$, and set $\alpha_{\mathfrak p}=1$ (respectively, $\alpha_{\mathfrak p}=-1$) when $E$ has split (respectively, non-split) multiplicative reduction at $\mathfrak p$. We write $\Q(\alpha):=\Q\bigl((\alpha_{\mathfrak p})_{\mathfrak p\mid p}\bigr)\subseteq \mathcal K'.$\\

\phantomsection
\label{Hypothesis}
\textbf{Hypothesis $(L_p)$} \cite{Disegni}.
Assume that the complex $L$-function $L(E,s)$ and its twists $L(E,\chi,s)$ by finite-order characters $\chi$ of $\Gamma$ admit analytic continuation to the entire complex plane. Following \cite{Disegni}, the conjectural $p$-adic $L$-function
$L_p^{(\Gamma)}(E)\in \mathcal O_{\mathcal K'}[[\Gamma]]\otimes \mathcal K'$ is characterised by the interpolation formula
\begin{equation}\label{inerpolation_formula}
    \iota_\infty\iota_p^{-1}\bigl(L_p^{(\Gamma)}(E)(\chi)\bigr)
=
\prod_{\mathfrak p\mid p}
\iota_\infty e_{\mathfrak p}(\chi_{\mathfrak p})
\cdot
\frac{L(E,\iota_\infty\chi,1)}
{|D_K|^{-1/2}\Omega_E},
\end{equation}
 for every finite-order character $\chi$ of $\Gamma$. Here $\Omega_E$ denotes the Néron period of $E$, $D_K$ is the discriminant of $K$, and the local interpolation factors $e_{\mathfrak p}(\chi_{\mathfrak p})$ are defined by 
\begin{equation*}
e_{\mathfrak p}(\chi_{\mathfrak p})=
\begin{cases}
(1-\alpha_{\mathfrak p}^{-1}\chi(\mathfrak p)^{-1})
(1-\alpha'_{\mathfrak p}\chi(\mathfrak p))
& \text{if }\chi_{\mathfrak p}\text{ is unramified},\\[0.1cm]
\alpha_{\mathfrak p}^{-f_{\mathfrak p}}\tau(\chi_{\mathfrak p})
& \text{if }\chi_{\mathfrak p}\text{ is ramified of conductor }f_{\mathfrak p},
\end{cases}
\end{equation*}
where 
\begin{equation*}
\alpha'_{\mathfrak p}=
\begin{cases}
\alpha_{\mathfrak p}^{-1}, & \text{if }E\text{ has good reduction at }\mathfrak p,\\
0, & \text{if }E\text{ has multiplicative reduction at }\mathfrak p.
\end{cases}
\end{equation*}
%{\color{blue}Theorem below is true for a general Galois representation $A$. We may change the theorem to $A$.}
\begin{theorem}\label{thm:example}
Let $K$ be a number field and let $E/K$ be an elliptic curve with good ordinary reduction at $p$. Suppose that $\Sel^{\Gr}(E/K_\cyc)^\vee$ is a finitely generated torsion $\Lambda$-module with
$\mu(\Sel^{\Gr}(E/K_\cyc)^\vee)=\lambda(\Sel^{\Gr}(E/K_\cyc)^\vee)=0$, and assume that the Iwasawa Main Conjecture holds over $K_\cyc$. Then there exists $m\in\N$ such that, for every $\widetilde K_\infty\in\calU(K_\cyc,m)$,
\[
\mathrm{char}_{\Lambda}\bigl(\Sel^{\Gr}(E/\widetilde K_\infty)^\vee\bigr)
   =\bigl(L_p^{(\widetilde\Gamma)}(E)\bigr),
\]
where $\widetilde\Gamma=\Gal(\widetilde K_\infty/K)$ and $L_p^{(\widetilde\Gamma)}(E)$ is the $p$-adic $L$-function satisfying \eqref{inerpolation_formula}.
\end{theorem}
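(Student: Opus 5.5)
The plan is to reduce the statement to showing that both sides are the unit ideal on a neighbourhood of $K_\cyc$.

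\emph{Step 1: the algebraic side.} Apply Theorem~\ref{prop:SelFukuda} with $*=\Gr$ and $K_\infty=K_\cyc$, taking $A=E[p^\infty]$, i.e.\ $T=T_pE$ with Panchishkin filtration coming from the formal group. For this I must check the hypotheses of Lemma~\ref{thm:Fukudanbd}:
\begin{enumerate}[(i)]
\item $I_v$ acts on $A^-=\tilde E_v[p^\infty]$ trivially, so "via $\omega_p^{-k_v}$" does not literally hold. I would instead rework the local kernel computation in Proposition~\ref{thm:controlSel} for weight $2$, where $A^-$ is unramified. Every prime above $p$ ramifies in $K_\cyc$, and $H^0(I_{\infty,w},A^-)=A^-$ is infinite. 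One then bounds $\ker h_{n,v_n}$ directly, as in Greenberg's control theorem for good ordinary elliptic curves. That kernel is controlled by $\tilde E_v(k_{v})[p^\infty]$ and is finite, uniformly in $n$ and uniformly over $\widetilde K_\infty$ in a small $\calU$-neighbourhood, since the residue fields of ramified primes stabilise.
\item I would also impose, or reduce to, $H^0(G_K,E[p^\infty])=0$, e.g.\ by noting that finite $H^0$ only changes the bounded constants $C_1,C_2$.
\end{enumerate}
This is the step I expect to be the main obstacle: adapting Lemma~\ref{thm:Fukudanbd} to the weight-two situation uniformly in the neighbourhood. Granting it, Theorem~\ref{prop:SelFukuda} gives $m$ such that for $\widetilde K_\infty\in\calU(K_\cyc,m)$ the dual Selmer group is torsion with $\mu\le0$, hence $\mu=0$, and then $\lambda\le0$, hence $\lambda=0$. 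So $\Sel^{\Gr}(E/\widetilde K_\infty)^\vee$ is finite and $\mathrm{char}_\Lambda=\Lambda$.

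\emph{Step 2: the analytic side.} The Iwasawa Main Conjecture over $K_\cyc$ together with $\mu=\lambda=0$ gives that $L_p^{(\Gamma_\cyc)}(E)$ is a unit of $\Lambda$ (after fixing $\mathcal O_{\mathcal K'}$-coefficients). Equivalently, its image under the augmentation $\chi=\mathbf 1$ is a $p$-adic unit. The key observation is that the value at the trivial character,
\[
L_p^{(\Gamma)}(E)(\mathbf 1)=\prod_{\mathfrak p\mid p}(1-\alpha_{\mathfrak p}^{-1})(1-\alpha'_{\mathfrak p})\frac{L(E,1)}{|D_K|^{-1/2}\Omega_E},
\]
is independent of the $\Z_p$-extension $\Gamma$ by \eqref{inerpolation_formula}. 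Hence $L_p^{(\widetilde\Gamma)}(E)(\mathbf 1)=L_p^{(\Gamma_\cyc)}(E)(\mathbf 1)$ is a $p$-adic unit. Provided $L_p^{(\widetilde\Gamma)}(E)$ lies in $\mathcal O_{\mathcal K'}[[\widetilde\Gamma]]$, which I would take as part of Hypothesis $(L_p)$ or deduce from integrality of the interpolation values, this makes it a unit in the local ring $\mathcal O_{\mathcal K'}[[\widetilde\Gamma]]$, because its image in the residue field is nonzero.

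\emph{Step 3: conclusion.} Both ideals equal the unit ideal, so $\mathrm{char}_\Lambda(\Sel^{\Gr}(E/\widetilde K_\infty)^\vee)=(L_p^{(\widetilde\Gamma)}(E))$ for every $\widetilde K_\infty\in\calU(K_\cyc,m)$.

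Notably, Step 2 needs no neighbourhood at all; the neighbourhood enters only through Step 1.
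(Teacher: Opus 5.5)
Your proposal follows the paper's proof. The algebraic side is shown to be the unit ideal on $\calU(K_\cyc,m)$ via Kleine's boundedness theorem, which forces $\mu=\lambda=0$. The analytic side is a unit because the value at the trivial character does not depend on the $\Z_p$-extension and is a $p$-adic unit by the Main Conjecture over $K_\cyc$.

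The differences are only in sourcing and explicitness. The paper invokes \cite[Theorem~4.11]{KleineCanad} directly for the elliptic curve rather than its own Theorem~\ref{prop:SelFukuda}. This sidesteps the weight-two obstruction you correctly flag: there $I_v$ acts trivially on $A^-$, so $H^0(I_{\infty,w},A^-)$ is infinite and that theorem does not apply as stated. The paper also tacitly assumes the integrality of $L_p^{(\widetilde\Gamma)}(E)$, which you make explicit.
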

\begin{proof}
    Since  $\mu(\Sel^{\Gr}(E/K_\cyc)^\vee)=\lambda(\Sel^{\Gr}(E/K_\cyc)^\vee)=0$, therefore the characteristic ideal of  $\Sel^{\Gr}(E/K_\cyc)^\vee$ is a  unit in $\Z_p[[\Gamma]]$.  From our assumption that the Iwasawa Main Conjecture is true, we get that the $p$-adic $L$-function  is also a unit in $\Z_p[[\Gamma]]$, i.e. both the analytic $\mu$-invariant, $\mu_\ana^\Gamma$ and $\lambda$-invariant, $\lambda_\ana^\Gamma$  are zero. 
    From \cite[Theorem~4.11]{KleineCanad}, we get that there exists a neighbourhood $\calU(K_\cyc,m)$, such that for all $\widetilde{K}_\infty\in \calU(K_\cyc,m)$, $\Sel^{\Gr}(E/\widetilde{K}_{\infty})^\vee$ is $\Lambda$-torsion and $\mu(\Sel^{\Gr}(E/\widetilde{K}_{\infty})^\vee)=0=\lambda(\Sel^{\Gr}(E/\widetilde{K}_{\infty})^\vee)$. Choose and fix such a $\widetilde{K}_\infty\in \calU(K_\cyc,m)$. Therefore the characteristic ideal of $\Sel^{\Gr}(E/\widetilde{K}_{\infty})^\vee$ is also a unit in $\Z_p[[\widetilde{\Gamma}]]$. 

    Let $L_p^{\widetilde{\Gamma}}(E)(\chi)$ be the $p$-adic $L$-function for $E$ over $\widetilde{K}_{\infty}$. Identifying $\Z_p[[\widetilde{\Gamma}]]$ with $\Z_p[[T]]$, we can view $L_p^{\widetilde{\Gamma}}(E)(\chi)$ as an element $F(T)$ in  $\Z_p[[T]]$. Let $\mathbbm{1}$ be the trivial character of $\Gamma$. Now, applying the interpolation formula in equation \eqref{inerpolation_formula}, we get that $L_p^{\Gamma}(E)(\mathbbm{1})= L_p^{\widetilde{\Gamma}}(E)(\mathbbm{1})$. Further, since  $\mu_\ana^\Gamma=\lambda_\ana^\Gamma=0$, we get that $L_p^{\Gamma}(E)(\mathbbm{1})$ is  a $p$-adic unit and hence $L_p^{\widetilde{\Gamma}}(E)(\mathbbm{1}) $ is also a $p$-adic unit. Consequently, $F(0) = u$ where $u \in \mathbb{Z}_p^\times$, implying that $F(T) \in \mathbb{Z}_p[[T]]^\times$. The desired result immediately follows.
\end{proof}
\begin{remark}
    Note that, although the above theorem assumes that the algebraic $\mu$- and $\lambda$-invariants vanish, the conclusion remains valid if one instead assumes that the analytic $\mu$- and $\lambda$-invariants vanish, since the Iwasawa Main Conjecture holds in our setting.
\end{remark}

\begin{example}\label{ex:IMC}
Let $p=5$ and $K=\Q(i)$. Consider the elliptic curve $E/\Q$ of LMFDB label \cite[\href{https://www.lmfdb.org/EllipticCurve/Q/52/a/2}{52.a2}]{lmfdb},
\[
E:y^2=x^3+x-10.
\]
It has good ordinary reduction at $5$. The analytic and algebraic $\mu$- and $\lambda$-invariants of $E$ over $\Q_\cyc$ are zero by \cite[p.~6]{Greenbergvatsal}. Let $\chi$ be the quadratic character of $\Gal(K/\Q)$, and let $E^\chi/\Q$ denote the quadratic twist. This is the curve of LMFDB label \cite[\href{https://www.lmfdb.org/EllipticCurve/Q/208/c/2}{208.c2}]{lmfdb},
\[
E^\chi:y^2=x^3+x+10.
\]
Its analytic $\mu$- and $\lambda$-invariants over $\Q_\cyc$ are also zero. The results of Skinner--Urban \cite{Skinnerurban} and Kato \cite{kato} give the Iwasawa Main Conjecture for $E^\chi/\Q_\cyc$, and hence its algebraic invariants vanish as well. By \cite[Corollary~2.7]{pw}, the algebraic $\mu$- and $\lambda$-invariants of $E$ over $K_\cyc$ are zero. Moreover,
\[
L_p^\Gamma(E/K)=L_p^\Gamma(E/\Q)L_p^\Gamma(E^\chi/\Q).
\]
Thus the analytic invariants over $K_\cyc$ vanish. Both sides of the Iwasawa Main Conjecture are therefore units, so they generate the unit ideal. Hence the hypotheses of Theorem~\ref{thm:example} are satisfied.
\end{example}

\section*{Acknowledgment}
The first named author gratefully acknowledges the support received from the HRI postdoctoral fellowship. The fourth named author thanks the support received from NBHM postdoctoral fellowship.
\bibliographystyle{alpha}
\bibliography{bib}
\end{document}

%% file: canonicalheader.tex
\usepackage{amscd,amsfonts,amsmath,amssymb,bbm,dsfont,comment}
\usepackage{enumerate,epsf,fancyhdr,float,graphicx,tabularx}
\usepackage{latexsym,mathrsfs,multirow,wasysym,hhline}
\usepackage[OT2,T1]{fontenc}
\usepackage[modulo,mathlines]{lineno}
\usepackage{tikz,tikz-cd}
\usepackage{dashrule}
\usetikzlibrary{matrix}
\usepackage{ tipa }
\newtheorem*{conjecture*}{Conjecture}

\newtheorem{theorem}{Theorem}[section]

\DeclareSymbolFont{cyrletters}{OT2}{wncyr}{m}{n}\DeclareMathSymbol{\Sha}{\mathalpha}{cyrletters}{"58}
\DeclareMathSymbol{\FSha}{\mathalpha}{cyrletters}{"11}
\renewcommand{\phi}{{\varphi}}

\renewcommand{\geq}{\geqslant}
\renewcommand{\leq}{\leqslant}

\newcommand{\cyc}{\mathrm{cyc}}

\newcommand{\links}{\left(\begin{array}{cc}}
\newcommand{\rechts}{\end{array}\right)}
\newcommand{\bai}{\left[\begin{array}{cc}}
\newcommand{\dai}{\end{array}\right]}
\newcommand{\hidari}{\left(\begin{array}{c}}
\newcommand{\migi}{\end{array}\right)}

\newcommand{\N}{\mathbb{N}}

\newcommand{\Q}{\mathbb{Q}}

\newcommand{\Z}{\mathbb{Z}}

\newcommand{\gp}{{\mathfrak p}}

\newcommand{\calE}{\mathcal{E}}

\newcommand{\calK}{\mathcal{K}}

\newcommand{\calO}{\mathcal{O}}

\newcommand{\calU}{\mathcal{U}}

\DeclareMathOperator{\Gr}{Gr}

\newcommand{\gal}{\mathrm{Gal}}

\newcommand{\Gal}{\operatorname{Gal}}

\newcommand{\Hom}{\operatorname{Hom}}

\newcommand{\pr}{\operatorname{pr}}

\newcommand{\coker}{\operatorname{coker}}

\newcommand{\ord}{\operatorname{ord}}

\newcommand{\Sel}{\operatorname{Sel}}
\newcommand{\gr}{\operatorname{Gr}}

\newcommand{\unram}{\textrm{ur}}

\newcommand{\lra}{\longrightarrow}
\newcommand{\ana}{\mathrm{ana}}

\renewcommand{\contentsname}{Contents\\{\footnotesize\normalfont(A table
of contents should normally not be included)}}

\newtheorem{assumption}[theorem]{Assumption}
\newtheorem{auxiliary proposition}[theorem]{Auxiliary Proposition}

\newtheorem{corollary}[theorem]{Corollary}

\newtheorem{definition}[theorem]{Definition}

\newtheorem{example}[theorem]{Example}

\newtheorem{lemma}[theorem]{Lemma}
\newtheorem{main conjecture}[theorem]{Main Conjecture}
\newtheorem{main theorem}[theorem]{Main Theorem}
\newtheorem{modesty proposition}[theorem]{Modesty Proposition}

\newtheorem{open problem}[theorem]{Open Problem}

\newtheorem{proposition}[theorem]{Proposition}

\newtheorem{remark}[theorem]{Remark}

\newtheorem{convergence lemma}[theorem]{Convergence Lemma}
\newtheorem{corrected lemma}[theorem]{Corrected Lemma}
\newtheorem{growth lemma}[theorem]{Growth Lemma}
\newtheorem{coefficient lemma}[theorem]{Integrality Lemma}
\newtheorem{interpolation lemma}[theorem]{Interpolation Lemma}
\newtheorem{kernel lemma}[theorem]{Kernel Lemma}
\newtheorem{limit lemma}[theorem]{Limit Lemma}
\newtheorem{tandem lemma}[theorem]{Modesty Lemma}
\newtheorem{zero-finding lemma}[theorem]{Zero-Finding Lemma}

\usepackage{hyperref}

\hypersetup{
	colorlinks=true,
	linkcolor=blue,
	filecolor=magenta,      
	urlcolor=blue,
	pdftitle={Overleaf Example},
	pdfpagemode=FullScreen,
}